\newcommand{\Q}{\mathbb{Q}}
\newcommand{\R}{\mathbb{R}}
\newcommand{\Z}{\mathbb{Z}}
\newcommand{\N}{\mathbb{N}}
\newcommand{\SL}{\mathrm{SL}}
\newcommand{\PSL}{\mathrm{PSL}}
\newcommand{\FF}{\mathcal F}
\newcommand{\wFF}{\widetilde{\FF}}
\newcommand{\rar}{\rightarrow}
\newcommand{\ol}{\overline}
\newcommand{\bsl}{\backslash}
\newcommand{\Supp}{\mathop{\rm Supp}}
\newtheorem{theorem}{Theorem}
\newtheorem{corollary}{Corollary}
\newtheorem{proposition}{Proposition}
\newtheorem{lemma}{Lemma}
\theoremstyle{definition}
\newtheorem{remark}{Remark}
\numberwithin{equation}{section}
\title[Poincar\'e sections for the horocycle flow in covers of $\SL(2,\R)/\SL(2,\Z)$]{Poincar\'e sections for the horocycle flow in covers of $\SL(2,\R)/\SL(2,\Z)$ and applications to Farey fraction statistics}
\author{Byron Heersink}
\address{Department of Mathematics, University of Illinois Urbana-Champaign, Urbana, IL 61801}
\email{heersin2@illinois.edu}
\begin{document}

\begin{abstract}
For a given finite index subgroup $H\subseteq\SL(2,\Z)$, we use a process developed by Fisher and Schmidt to lift a Poincar\'e section of the horocycle flow on $\SL(2,\R)/\SL(2,\Z)$ found by Athreya and Cheung to the finite cover $\SL(2,\R)/H$ of $\SL(2,\R)/\SL(2,\Z)$. We then use the properties of this section to prove the existence of the limiting gap distribution of various subsets of Farey fractions. Additionally, to each of these subsets of fractions, we extend solutions by Xiong and Zaharescu, and independently Boca, to a Diophantine approximation problem of Erd\H{o}s, Sz\"usz, and Tur\'an.
\end{abstract}

\maketitle

\section{Introduction}\label{intro}

The significant work of Elkies and McMullen \cite{EM1} and of Marklof and Str\"ombergsson \cite{MS} has demonstrated that ergodic properties of homogeneous flows can provide a very powerful device in the study of the limiting gap distributions of certain sequences of arithmetic origin. Recently, Athreya and Cheung \cite{AC} realized the horocycle flow on $\SL(2,\R)/\SL(2,\Z)$ as a suspension flow over the BCZ map introduced by Boca, Cobeli, and Zaharescu \cite{BCZ} in their study of statistical properties of Farey fractions. Athreya and Cheung used this connection to rederive the limiting gap distribution and other properties of Farey fractions. The process used in \cite{AC} to obtain these results was later generalized in \cite{A} to explain the gap distributions of various different sequences. Also, recent work of Fisher and Schmidt \cite{FS} was concerned with lifting Poincar\'e sections of the geodesic flow on the unit tangent bundle of the modular surface to a finite cover, with the primary aim of obtaining statistical properties of continued fractions.

In this paper, we use the process in \cite{FS} to explicitly lift, for every finite index subgroup $H\subseteq\SL(2,\Z)$, the section of the horocycle flow discovered in \cite{AC} to the cover $\SL(2,\R)/H$ of $\SL(2,\R)/\SL(2,\Z)$. As one application, we prove in Theorem \ref{T1} the existence of the limiting gap measure of certain subsets of Farey fractions following the ideas in \cite{AC} and the more general framework of \cite[Theorem 2.5]{A}. A given subset we consider is determined, as described below, by a finite index subgroup $H\subseteq\SL(2,\Z)$, and corresponds to a Poincar\'e section of $\SL(2,\Z)/H$ obtained by intersecting the lift of the section in \cite{AC} with certain sheets of the cover $\SL(2,\R)/H\rar\SL(2,\R)/\SL(2,\Z)$. As a second application, we prove Theorem \ref{T2}, which establishes, for each of the aforementioned subsets of Farey fractions, the existence of the limiting Lebesgue measure of the real numbers in $[0,1]$ that are in some sense well-approximated by elements in the subset. This solves an analogue of a Diophantine approximation problem posed by Erd\H{o}s, Sz\"usz, and Tur\'an \cite{EST}.

Recall that the Farey sequence of order $Q$ is the set $\FF(Q)$ of fractions $\frac{a}{q}\in[0,1]$ such that $(a,q)=1$ and $q\leq Q$. Various properties regarding the spacing statistics of the increasing sequence $(\FF(Q))$ of subsets of $[0,1]$ have been studied (see, e.g., \cite{Hall}, \cite{ABCZ}, \cite{BZ}). Additionally, certain subsets of Farey sequences have been considered. For instance, if $\FF_{Q,d}\subseteq\FF(Q)$ is the set of fractions $\frac{a}{q}$ with $(q,d)=1$ and $\wFF_{Q,\ell}\subseteq\FF(Q)$ is the set of fractions $\frac{a}{q}$ with $\ell\nmid a$, then the number of pairs $(\frac{a}{q},\frac{a'}{q'})$ of consecutive fractions in $\FF_{Q,d}$ with fixed $a'q-aq'=k$ has been estimated by Badziahin and Haynes \cite{BH}, the pair correlation function of the sequence $(\FF_{Q,d_Q})$ was shown to exist by Xiong and Zaharescu \cite{XZ1} where $d_Q$ varies with $Q$ subject to the constraints $d_{Q_1}\mid d_{Q_2}$ as $Q_1<Q_2$ and $d_Q\ll Q^{\log \log Q/4}$, and the limiting gap distribution measure for the sequences $(\FF_{Q,d})$ and $(\wFF_{Q,\ell})$ were shown to exist for fixed $d$ and $\ell$ by Boca, Spiegelhalter, and the author \cite{BHS}.

For a given finite subset $A=\{x_0\leq x_1\leq\cdots\leq x_N\}$ of $[0,1]$, we define the \textit{gap distribution measure} of $A$ to be the probability measure $\nu_A$ on $[0,\infty)$ such that \[\nu_A[0,\xi]=\frac{1}{N}\#\{j\in[1,N]:N(x_j-x_{j-1})\leq\xi(x_N-x_0)\}, \quad \xi\geq0.\] For a sequence $(A_n)$ of finite subsets of $[0,1]$, we call the weak limit of $(\nu_{A_n})$, if it exists, the \textit{limiting gap measure} of $(A_n)$.

Let $G=\SL(2,\R)$ and $\Gamma=\SL(2,\Z)$. Our first main result is the following:

\begin{theorem}\label{T1}
Let $H$ be a finite index subgroup of $\Gamma$ and $M\subseteq\Gamma/H$ be a nonempty subset, closed under left multiplication by $\left(\begin{smallmatrix}1&-1\\0&1\end{smallmatrix}\right)$. Also, for $Q\in\N$, let $\FF_M(Q)\subseteq\FF(Q)$ be the set of fractions $\frac{a}{q}$ such that
\[\left(\begin{array}{cc}q'&a'\\-q&-a\end{array}\right)H\in M,\]
where $\frac{a'}{q'}$ is the successor of $\frac{a}{q}$ in $\FF(Q)$. Then the sequence $(\FF_M(Q))$ becomes equidistributed in $[0,1]$ as $Q\rar\infty$. Furthermore, if $I\subseteq[0,1]$ is a given subinterval and $\FF_{I,M}(Q)=\FF_M(Q)\cap I$, then the limiting gap measure $\nu_{I,M}$ of $(\FF_{I,M}(Q))$ exists and has a continuous and piecewise real-analytic density.
\end{theorem}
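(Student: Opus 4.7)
The plan is to realize $\FF_M(Q)$ as the set of $x$-coordinates at which a specific horocycle orbit on the cover $G/H$ meets a lifted Poincar\'e section, and then invoke the general framework of \cite[Theorem 2.5]{A}. Recall from \cite{AC} that the horocycle orbit of $e_Q\Gamma$ (with $e_Q=\mathrm{diag}(1/Q,Q)$) meets the Athreya-Cheung section $\Omega\subset G/\Gamma$ at times encoding the gaps between consecutive fractions in $\FF(Q)$. At the visit corresponding to $a/q$ with successor $a'/q'$, the lift to $G$ is given precisely by the element $g_{a/q}=\left(\begin{smallmatrix}q'&a'\\-q&-a\end{smallmatrix}\right)\in\Gamma$, which has determinant $qa'-q'a=1$.

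First I would apply the Fisher-Schmidt construction \cite{FS} to lift $\Omega$ to a Poincar\'e section $\widetilde\Omega\subset G/H$ for the horocycle flow on the cover. Since the covering $G/H\rar G/\Gamma$ has fibre $\Gamma/H$, the lift decomposes canonically as $\widetilde\Omega=\bigsqcup_{\gamma H\in\Gamma/H}\Omega^{\gamma H}$, and the bookkeeping from \cite{FS} implies that the visit indexed by $a/q$ lands in the sheet $\Omega^{g_{a/q}H}$. Hence $\FF_M(Q)$ is exactly the set of fractions whose visit falls in $\widetilde\Omega_M:=\bigsqcup_{\gamma H\in M}\Omega^{\gamma H}$.

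Next I would verify that $\widetilde\Omega_M$ is itself a Poincar\'e section for the horocycle flow on $G/H$. Here the closure hypothesis on $M$ plays the decisive role: the BCZ map sending one Farey visit to the next corresponds, on the lifted section, to acting on the sheet label by left multiplication by $\left(\begin{smallmatrix}1&-1\\0&1\end{smallmatrix}\right)$, so closure of $M$ under this action guarantees that the induced first-return map on $\widetilde\Omega_M$ is well-defined, with return-time function inherited piecewise from that of $\Omega$. With this section in hand, equidistribution of long horocycle orbits on $G/H$ (which holds because $H$ is a non-uniform lattice of finite covolume, so the classical Dani-Smillie and Sarnak results apply) yields the equidistribution of $(\FF_M(Q))$ in $[0,1]$ by a standard counting argument. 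Applying \cite[Theorem 2.5]{A} to the section $\widetilde\Omega_M$ and the subinterval $I$ then produces the limiting gap measure $\nu_{I,M}$ as an explicit pushforward of the normalized restriction of Haar measure to a subset of $\widetilde\Omega_M$, and the continuity and piecewise real-analyticity of its density follow from those of the return-time function on the Athreya-Cheung section, which are preserved under the finite, piecewise-isometric lift.

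The main obstacle I expect is the sheet-tracking in the second step: verifying that the matrix identification makes the return map on $\widetilde\Omega$ act on sheet labels exactly by left multiplication by $\left(\begin{smallmatrix}1&-1\\0&1\end{smallmatrix}\right)$, and that closure of $M$ under this action is the sharp condition for $\widetilde\Omega_M$ to be a section. Once this combinatorial correspondence is pinned down, the remaining pieces (horocycle equidistribution, the invocation of \cite[Theorem 2.5]{A}, and regularity of the density) should follow from established tools with only routine modifications to accommodate the cover and the subinterval $I$.
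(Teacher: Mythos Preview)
Your overall architecture---lift the Athreya--Cheung section via Fisher--Schmidt, restrict to the sheets indexed by $M$, and apply horocycle equidistribution on $G/H$---matches the paper's. But two of your key claims are wrong, and they hide most of the actual work.

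First, the sheet-tracking is not what you assert. The first-return map $r'$ on the full lift sends $p_{a,b}\gamma H$ to $p_{T(a,b)}\left(\begin{smallmatrix}K&1\\-1&0\end{smallmatrix}\right)\gamma H$ with $K=\lfloor(1+a)/b\rfloor$ (see \eqref{e41}), so the sheet label changes by left multiplication by $\left(\begin{smallmatrix}K&1\\-1&0\end{smallmatrix}\right)$, which depends on the point and is \emph{not} a power of $\left(\begin{smallmatrix}1&-1\\0&1\end{smallmatrix}\right)$. Consequently $M$ is not $r'$-invariant, and closure of $M$ under $\left(\begin{smallmatrix}1&-1\\0&1\end{smallmatrix}\right)$ does not make your $\widetilde\Omega_M$ closed under the lifted BCZ map. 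The paper instead shows $\Omega_M$ is a Poincar\'e section by a soft ergodicity argument (Section~\ref{sec:4}); the closure hypothesis is used for quite different purposes---to make $(\FF_M(Q))$ monotone in $Q$, and later in the fine structural analysis of Section~\ref{sec:5.1}.

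Second, and more seriously, the regularity of the density does \emph{not} follow by ``inheritance'' from the base. The return time $R_M$ to $\Omega_M$ is a sum of an \emph{a priori} unbounded number of base return times, one for each visit to $\Omega''\setminus\Omega_M$ before re-entering $\Omega_M$. Establishing that $R_M$ is piecewise rational of the form $\frac{1}{a(b-sa)}$ on polygonal regions $R_s$, that each $R_s\cap Pm_iH/H$ is a genuine polygon, that $\partial R_M^{-1}[0,c]$ has measure zero, and---crucially---that for each $c$ only \emph{finitely} many $s$ contribute to $R_M^{-1}[0,c]$, is the technical heart of the paper (Section~\ref{sec:5}) and requires a delicate analysis of matrices in $\bigcup_{i,j}m_iHm_j^{-1}$; none of this is automatic from the lift being ``piecewise isometric.'' You also omit the preliminary step (Section~\ref{sec:3}, Lemmas~\ref{L1}--\ref{L2}) proving that $\bigcup_Q\FF_M(Q)$ is dense in $[0,1]$, which is needed so that the extreme points of $\FF_{I,M}(Q)$ converge to the endpoints of $I$ in the equidistribution argument.
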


We include the hypothesis that $M$ is closed under left multiplication by $\left(\begin{smallmatrix}1&-1\\0&1\end{smallmatrix}\right)$ in order to ensure that $(\FF_M(Q))$ is an increasing sequence of sets. Indeed, let $\frac{a}{q}\in\FF_M(Q)$ so that
\[\left(\begin{array}{cc}q'&a'\\-q&-a\end{array}\right)H\in M,\]
where $\frac{a'}{q'}$ is the successor of $\frac{a}{q}$ in $\FF(Q)$. If $Q'\geq Q$, then by the mediant property of Farey fractions, the successor of $\frac{a}{q}$ in $\FF(Q')$ is equal to $\frac{na+a'}{nq+q'}$ for some $n\geq0$. We then have
\[\left(\begin{array}{cc}nq+q'&na+a'\\-q&-a\end{array}\right)H=\left(\begin{array}{cc}1&-1\\0&1\end{array}\right)^n\left(\begin{array}{cc}q'&a'\\-q&-a\end{array}\right)H\in M,\]
implying that $\frac{a}{q}\in\FF_M(Q')$, and hence $(\FF_M(Q))$ is increasing.

Applying Theorem \ref{T1} with $H=\Gamma(m)$, where $m$ is a positive integer and $\Gamma(m)$ is the congruence subgroup
\[\left\{\left(\begin{array}{cc}a&b\\c&d\end{array}\right)\equiv\left(\begin{array}{cc}1&0\\0&1\end{array}\right)\bmod{m}:a,b,c,d\in\Z,ad-bc=1\right\}\]
of $\Gamma$, and with
\[M=\left\{\left(\begin{array}{cc}n_4&n_3\\-n_2&-n_1\end{array}\right)H\in\Gamma/H:(n_1,n_2)\bmod{m}\in A\right\}\]
where $A\subseteq(\Z/m\Z)^2$ is such that $M$ is nonempty, i.e., there is some $(n_1,n_2)\in A$ such that $(n_1,n_2,m)=1$, we have the following result:

\begin{corollary}\label{C1}
Let $A\subseteq(\Z/m\Z)^2$ contain some $(n_1,n_2)$ such that $(n_1,n_2,m)=1$, and let $I\subseteq[0,1]$ be a subinterval. Then for $Q\in\N$, let $\FF_{m,A}(Q)$ be the set of fractions $\frac{a}{q}\in\FF(Q)$ such that $(a,q)\equiv(n_1,n_2)\bmod{m}$ for some $(n_1,n_2)\in A$, and let $\FF_{I,m,A}(Q)=\FF_{m,A}(Q)\cap I$. Then $(\FF_{m,A}(Q))$ becomes equidistributed in $[0,1]$ as $Q\rar\infty$, and the limiting gap measure $\nu_{I,m,A}$ of $(\FF_{I,m,A}(Q))$ exists and has a continuous and piecewise real-analytic density.
\end{corollary}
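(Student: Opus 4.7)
The plan is to apply Theorem \ref{T1} with $H=\Gamma(m)$ and $M$ the set of cosets specified just before the corollary. The work is entirely one of verification: that $M$ is well-defined and nonempty, that it is closed under left multiplication by $\left(\begin{smallmatrix}1&-1\\0&1\end{smallmatrix}\right)$, and that $\FF_M(Q)$ is precisely $\FF_{m,A}(Q)$.

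For well-definedness, I would first observe that cosets of $\Gamma(m)$ in $\Gamma$ coincide with residue classes mod $m$, so the condition $(n_1,n_2)\bmod{m}\in A$ depends only on the coset $\gamma H$. Nonemptiness uses the hypothesis on $A$: given $(n_1,n_2)\in A$ with $(n_1,n_2,m)=1$, the Chinese Remainder Theorem provides integer lifts with $\gcd(n_1,n_2)=1$, and then B\'ezout produces integers $n_3,n_4$ with $n_2n_3-n_1n_4=1$, so that $\left(\begin{smallmatrix}n_4&n_3\\-n_2&-n_1\end{smallmatrix}\right)$ lies in $\Gamma$ and represents an element of $M$.

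Closure is immediate from the product
\[\left(\begin{array}{cc}1&-1\\0&1\end{array}\right)\left(\begin{array}{cc}n_4&n_3\\-n_2&-n_1\end{array}\right)=\left(\begin{array}{cc}n_4+n_2&n_3+n_1\\-n_2&-n_1\end{array}\right),\]
whose bottom row, and hence the residue $(n_1,n_2)\bmod{m}$, is unchanged. Finally, identifying $a=n_1$ and $q=n_2$ from the matrix $\left(\begin{smallmatrix}q'&a'\\-q&-a\end{smallmatrix}\right)$ of Theorem \ref{T1}, I would conclude that $\FF_M(Q)=\FF_{m,A}(Q)$ and $\FF_{I,M}(Q)=\FF_{I,m,A}(Q)$; Theorem \ref{T1} then delivers both conclusions of Corollary \ref{C1} verbatim. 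Since each check is either tautological or a one-line CRT/B\'ezout argument, I expect no substantive obstacle in the proof.
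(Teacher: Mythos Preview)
Your proposal is correct and follows exactly the approach the paper takes: the paper derives Corollary~\ref{C1} by applying Theorem~\ref{T1} with $H=\Gamma(m)$ and the set $M$ described immediately before the corollary, leaving the routine verifications (well-definedness, nonemptiness, closure under $\left(\begin{smallmatrix}1&-1\\0&1\end{smallmatrix}\right)$, and $\FF_M(Q)=\FF_{m,A}(Q)$) implicit. You have simply made those checks explicit, and each is handled correctly.
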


Corollary \ref{C1} includes the existence of the limiting gap measures of $(\FF_{Q,d})$ and $(\wFF_{Q,\ell})$ proven in \cite{BHS} as special cases since $\FF_{Q,d}=\FF_{d,A}(Q)$, where $A\subseteq(\Z/m\Z)^2$ is the subset consisting of all pairs $(n_1,n_2)$ such that $(n_2,d)=1$, and $\wFF_{Q,\ell}=\FF_{\ell,A'}$, where $A'\subseteq(\Z/m\Z)^2$ is the subset having all pairs $(n_1,n_2)$ with $n_1\not\equiv0\bmod{\ell}$. Congruence subgroups also appear in the study \cite[Corollary 2.7]{MS} of the related problem of proving the existence of the limiting gap measure for the angles of visible points in $\Z^2$ with respect to an observer at a rational point. See \cite{ACL} for a similar application of the ergodic properties of the horocycle flow on $G/H$, in the case where $H$ is the Hecke $(2,5,\infty)$ triangle group, to the computation of the limiting gap measure of slopes on the golden L.

In \cite{EST}, Erd\H{o}s, Sz\"usz, and Tur\'an introduced the Diophantine problem concerning the sets
\begin{align*}
S(n,\alpha,c)=\left\{\xi\in[0,1]:\mbox{there exists }a,q\in\Z\mbox{ such that }(a,q)=1,n\leq q\leq nc,|q\xi-a|\leq\frac{\alpha}{q}\right\}
\end{align*}
where $n\in\N$, $\alpha>0$, and $c\geq1$. They posed the problem of deciding the existence of the limit of the Lebesgue measures
\[\lim_{n\rar\infty}\lambda(S(n,\alpha,c))\]
for all $\alpha$ and $c$. The limit was shown to exist by Kesten and S\'os \cite{KS}; and later, Xiong and Zaharescu \cite{XZ2}, and independently Boca \cite{Boca}, obtained formulas for how to calculate the limit explicitly. This type of problem was recently investigated in higher dimensions as well as the setting of translation surfaces by Athreya and Ghosh \cite{AG}. Our second main result is the following theorem, which establishes the limiting measure for sets defined in the same way as $S(n,\alpha,c)$, with the restriction that the pairs $(a,q)$ are such that $\frac{a}{q}\in\FF_M(Q)$ for some $Q\in\N$.

\begin{theorem}\label{T2}
Let $\FF_M(Q)$ be the set of Farey fractions defined as in Theorem \ref{T1}. Then for $n\in\N$, $\alpha>0$, and $c\geq1$, let
\begin{align*}
S_M(n,\alpha,c)=\bigg\{\xi\in[0,1]:&\,\,\mbox{there exists }\frac{a}{q}\in\FF_M(\lfloor nc\rfloor)\mbox{ in lowest terms}\\
&\,\,\mbox{such that }q\geq n,|q\xi - a|\leq\frac{\alpha}{q}\bigg\}
\end{align*}
and for a given subinterval $I\subseteq[0,1]$, $S_{I,M}(n,\alpha,c)=I\cap S_M(n,\alpha,c)$. Then the limits
\[\lim_{n\rar\infty}\lambda(S_M(n,\alpha,c))\quad\mbox{and}\quad\lim_{n\rar\infty}\lambda(S_{I,M}(n,\alpha,c))\]
exist, and if
\[\lim_{n\rar\infty}\lambda(S_M(n,\alpha,c))=\varrho_M(\alpha,c), \quad \mbox{then} \quad \lim_{n\rar\infty}\lambda(S_{I,M}(n,\alpha,c))=|I|\varrho_M(\alpha,c).\]
\end{theorem}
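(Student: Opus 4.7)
The plan is to adapt the approach of Boca \cite{Boca} and Xiong--Zaharescu \cite{XZ2}, replacing the BCZ Poincar\'e section on $G/\Gamma$ by its lift to $G/H$ constructed in Theorem \ref{T1}. Set $Q=\lfloor nc\rfloor$ and, for each $\frac{a}{q}\in\FF_M(Q)$ with $q\geq n$, let $I_{a,q}=[\frac{a}{q}-\frac{\alpha}{q^2},\frac{a}{q}+\frac{\alpha}{q^2}]$, so that $S_M(n,\alpha,c)=\bigcup I_{a,q}$. A short check shows that, for fixed $\alpha,c$ and $n$ sufficiently large, two intervals $I_{a,q}$ and $I_{a',q'}$ can intersect only when $\frac{a}{q}$ and $\frac{a'}{q'}$ are $\FF(Q)$-consecutive, and no triple intersections occur. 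Inclusion--exclusion then gives
\[\lambda(S_M(n,\alpha,c))=\sum_{\substack{\frac{a}{q}\in\FF_M(Q)\\ q\geq n}}|I_{a,q}|\;-\;\sum|I_{a,q}\cap I_{a',q'}|,\]
where the second sum runs over $\FF(Q)$-consecutive pairs both of whose endpoints lie in $\FF_M(Q)$ with denominators at least $n$.

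Next, I would transport both sums to the lifted Poincar\'e section $\widetilde\Omega\subseteq G/H$ from Theorem \ref{T1}: it is the preimage of the Athreya--Cheung triangle $\Omega=\{(x,y):0<x,y\leq 1,\,x+y>1\}$ under $G/H\to G/\Gamma$, naturally identified with $\Omega\times\Gamma/H$. The $\FF(Q)$-consecutive pairs correspond to the points $((q/Q,q'/Q),\bigl(\begin{smallmatrix}q'&a'\\-q&-a\end{smallmatrix}\bigr)H)$ of $\widetilde\Omega$, and the condition $\frac{a}{q}\in\FF_M(Q)$ exactly singles out the sheets indexed by $M$. Writing $(x,y)=(q/Q,q'/Q)$, one has $|I_{a,q}|=\frac{2\alpha}{Q^2 x^2}$ and
\[|I_{a,q}\cap I_{a',q'}|=\max\!\left(0,\;\frac{\alpha}{Q^2 x^2}+\frac{\alpha}{Q^2 y^2}-\frac{1}{Q^2 xy}\right),\]
so after rescaling by $Q^2$ both sums become Riemann sums for explicit integrals over $\{(x,y,gH)\in\widetilde\Omega:x\geq 1/c,\,gH\in M\}$.

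The equidistribution of $\FF(Q)$-consecutive pairs on $\widetilde\Omega$ --- the cusp-return equidistribution of the horocycle flow on $G/H$ underlying Theorem \ref{T1} --- then converts these Riemann sums into the claimed limit $\varrho_M(\alpha,c)$. For the subinterval version, observe that each $I_{a,q}$ with $\frac{a}{q}\in I$ is contained in a neighborhood of $I$ of size $O(1/n)$, and $I_{a,q}$ with $\frac{a}{q}\notin I$ meets $I$ only when $\frac{a}{q}$ is within $O(1/n)$ of $\partial I$; thus $\lambda(S_{I,M}(n,\alpha,c))$ agrees, up to $O(1/n)$, with the part of the previous sums for which $\frac{a}{q}\in I$. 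Since $\FF_M(Q)$ is equidistributed on $[0,1]$ by the first assertion of Theorem \ref{T1}, the restricted Riemann sums converge to $|I|\cdot\varrho_M(\alpha,c)$.

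The main obstacle I anticipate is making the Riemann-sum-to-integral step rigorous on $\widetilde\Omega$: the indicator of $M$ is only locally constant and the overlap integrand involves a $\max(0,\cdot)$, so I need the equidistribution on the lifted section to apply to test functions with jump discontinuities, together with uniform control of boundary effects near $q=n$ and $\partial I$ so that the limiting constant is exactly $|I|\varrho_M(\alpha,c)$ rather than being perturbed by lower-order terms.
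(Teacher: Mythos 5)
The core structure you propose—inclusion--exclusion on the covering intervals, transport to the lifted section, equidistribution—is the same as the paper's. However, there is a genuine gap in the combinatorial step: you assert that ``two intervals $I_{a,q}$ and $I_{a',q'}$ can intersect only when $\frac{a}{q}$ and $\frac{a'}{q'}$ are $\FF(Q)$-consecutive, and no triple intersections occur.'' This is false in general. If $\frac{a}{q}<\frac{a'}{q'}<\frac{a''}{q''}$ are $\FF(Q)$-consecutive with all denominators at least $n$, then $\frac{a''}{q''}-\frac{a}{q}=\frac{1}{qq'}+\frac{1}{q'q''}$, while the intervals $I_{a,q}$ and $I_{a'',q''}$ have combined half-lengths $\frac{\alpha}{q^2}+\frac{\alpha}{q''^2}$. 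Since all denominators lie in $[n,\lfloor nc\rfloor]$, both quantities are of order $1/n^2$, and for $\alpha$ sufficiently large (e.g., $\alpha>1$) the two outer intervals already overlap, as do higher-order tuples when $\alpha$ and $c$ grow. The paper therefore does not truncate at pairwise overlaps: it cites Lemma 3 of Xiong--Zaharescu to establish a finite bound $K=K(\alpha,c)$ on how far apart in the $\FF(Q)$-index two overlapping intervals can be, and carries out the full inclusion--exclusion up to $K$-fold intersections, obtaining a \emph{finite} sum of the form $\sum_{r\leq K}(-1)^r\sum_{j_1<\cdots<j_r\leq K}\int_{\mathscr{H}_{j_1,\ldots,j_r}(n/Q)}f^{(\alpha)}_{j_1,\ldots,j_r}\,d\rho_{Q,I,M}$. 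Your two-term truncation only matches this when $\alpha$ and $c$ are small enough that $K\leq 1$, so the proof as written fails for general parameters.

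A second, related imprecision: you restrict the overlap analysis to $\FF(Q)$-consecutive pairs, but the relevant objects are elements of $\FF_M(Q)$ with denominators at least $n$, and two such fractions whose intervals overlap need not be $\FF(Q)$-consecutive (again for large $\alpha,c$). The paper handles this by working directly with the return map $r_M$ on $\Omega_M$ and the $j$-th return time $R_M^{(j)}$, so that an $(r+1)$-tuple of overlapping fractions in $\FF_{I,M}(Q)$ at $\FF_{I,M}$-index offsets $j_1,\ldots,j_r$ is encoded intrinsically on $\Omega_M$; this is what makes the Riemann-sum-to-integral step converge against $\rho_{Q,I,M}\rar\mu_{\Omega_M}$. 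You should replace the BCZ return map $r'$ on the full cover by $r_M$ on the sub-section $\Omega_M$ and reinstate the full $K$-term inclusion--exclusion. Your treatment of the $q\geq n$ constraint (passage from $\mathscr{H}(n/Q)$ to $\mathscr{H}(1/c)$) and of the boundary effects near $\partial I$ is essentially right and matches the paper's careful $\limsup/\liminf$ sandwich there.
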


Again, letting $H=\Gamma(m)$ and $M\subseteq\Gamma/H$ be defined as above, we obtain the following corollary:

\begin{corollary}\label{C2}
Let $A\subseteq(\Z/m\Z)^2$ contain some $(n_1,n_2)$ such that $(n_1,n_2,m)=1$. Then for $n\in\N$, $\alpha>0$, and $c\geq1$, let
\begin{align*}
S(n,\alpha,c,A)=\bigg\{\xi\in[0,1]:&\,\,\mbox{there exists }a,q\in\Z_{\geq0}\mbox{ such that }(a,q)=1,\\
&\,\,(a,q)\bmod{m}\in A,n\leq q\leq nc,|q\xi - a|\leq\frac{\alpha}{q}\bigg\}
\end{align*}
and for a given subinterval $I\subseteq[0,1]$, $S_I(n,\alpha,c,A)=I\cap S(n,\alpha,c,A)$. Then the limits
\[\lim_{n\rar\infty}\lambda(S(n,\alpha,c,A))\quad\mbox{and}\quad\lim_{n\rar\infty}\lambda(S_I(n,\alpha,c,A))\]
exist, and
\[\lim_{n\rar\infty}\lambda(S_I(n,\alpha,c,A))=|I|\lim_{n\rar\infty}\lambda(S(n,\alpha,c,A)).\]
\end{corollary}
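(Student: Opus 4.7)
The plan is to deduce Corollary \ref{C2} directly from Theorem \ref{T2} by the same specialization used to pass from Theorem \ref{T1} to Corollary \ref{C1}. First I would take $H=\Gamma(m)$ and set
\[M=\left\{\left(\begin{array}{cc}n_4&n_3\\-n_2&-n_1\end{array}\right)H\in\Gamma/H:(n_1,n_2)\bmod m\in A\right\}.\]
The hypothesis that $A$ contains a pair $(n_1,n_2)$ with $(n_1,n_2,m)=1$ ensures $M$ is nonempty (some such pair extends to an element of $\Gamma$, since $\SL(2,\Z)\rar\SL(2,\Z/m\Z)$ is surjective). Membership in $M$ depends only on the bottom row modulo $m$, and left multiplication by $\left(\begin{smallmatrix}1&-1\\0&1\end{smallmatrix}\right)$ does not alter the bottom row of a $2\times2$ matrix, so $M$ is closed under this action and the hypotheses of Theorem \ref{T2} are met.

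Next I would identify $\FF_M(Q)$ with the set of $\frac{a}{q}\in\FF(Q)$ satisfying $(a,q)\bmod m\in A$, which is exactly the identification already noted in the derivation of Corollary \ref{C1}: if $\frac{a'}{q'}$ denotes the successor of $\frac{a}{q}$ in $\FF(Q)$, then $\left(\begin{smallmatrix}q'&a'\\-q&-a\end{smallmatrix}\right)\in\Gamma$, and its coset lies in $M$ precisely when $(a,q)\bmod m\in A$. Consequently, the coprime pairs $(a,q)$ with $\frac{a}{q}\in\FF_M(\lfloor nc\rfloor)$ and $q\geq n$ are exactly those with $n\leq q\leq nc$ and $(a,q)\bmod m\in A$ (the replacement of $\lfloor nc\rfloor$ by $nc$ is harmless because $q\in\Z$). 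Therefore $S_M(n,\alpha,c)=S(n,\alpha,c,A)$ and $S_{I,M}(n,\alpha,c)=S_I(n,\alpha,c,A)$, and both conclusions of Corollary \ref{C2} follow immediately from the corresponding assertions in Theorem \ref{T2}.

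The argument is essentially a bookkeeping exercise; I do not anticipate a genuine obstacle. The only two points that require care are the verification that $M$ is closed under left multiplication by $\left(\begin{smallmatrix}1&-1\\0&1\end{smallmatrix}\right)$ (which reduces to the elementary fact that this multiplication preserves the bottom row) and the matching of the defining conditions of $S(n,\alpha,c,A)$ and $S_M(n,\alpha,c)$, including the harmless replacement of $nc$ by $\lfloor nc\rfloor$ when constraining $q$.
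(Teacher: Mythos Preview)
Your proposal is correct and is exactly the approach the paper takes: it states Corollary~\ref{C2} immediately after Theorem~\ref{T2} with the remark ``Again, letting $H=\Gamma(m)$ and $M\subseteq\Gamma/H$ be defined as above, we obtain the following corollary,'' i.e., the same specialization used for Corollary~\ref{C1}. Your verification of the two bookkeeping points (nonemptiness and $\left(\begin{smallmatrix}1&-1\\0&1\end{smallmatrix}\right)$-invariance of $M$, and the matching $S_M(n,\alpha,c)=S(n,\alpha,c,A)$) is sound.
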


In Section \ref{sec:2}, we review the work of Athreya and Cheung \cite{AC} in showing the horocycle flow as a suspension flow by giving a Poincar\'e section $\Omega'$ of the horocycle flow in which the first return map is the BCZ map. We also outline how $\Omega'$ relates to the gaps in Farey fractions and how Athreya and Cheung used the ergodic properties of the horocycle flow to prove the equidistribution of certain points in $\Omega'$ that we call Farey points, which in turn yields results about Farey fraction gaps. Then in Sections \ref{sec:3}--\ref{sec:6}, we prove Theorem \ref{T1} using the same process. We start in Section \ref{sec:3} by proving that $\bigcup_{Q\in\N}\FF_M(Q)$ is dense in $[0,1]$, and this involves proving an elementary lemma regarding representatives for cosets in $\Gamma/H$. In Section \ref{sec:4}, we use results in \cite{FS} to construct a Poincar\'e section $\Omega_M$ of the horocycle flow on $G/H$ analogous to $\Omega'$ that relates to the gaps in $(\FF_M(Q))$. In Section \ref{sec:5}, we prove some important properties of the first return time function of $\Omega_M$ which have an effect on the existence and properties of $\nu_{I,M}$ mentioned in Theorem \ref{T1}. Then in Section \ref{sec:6}, we prove the equidistribution of certain points in $\Omega_M$, analogous to the Farey points in $\Omega'$. From this equidistribution we can conclude the existence of the limiting gap measure of $(\FF_{I,M}(Q))$. In Section \ref{sec:7}, we examine a particular property that we call the repulsion gap of $(\FF_{I,M}(Q))$, which is the infimum of the support of $\nu_{I,M}$. In the particular case where $(\FF_{I,M}(Q))$ is the sequence of Farey fractions $\frac{a}{q}$ with $q\equiv1\bmod{m}$ for some fixed $m\in\N$, the repulsion gap is explicitly computed as 
\begin{equation}\label{e12}
\frac{3}{\pi^2m\prod_{p|m}(1-\frac{1}{p^2})}.
\end{equation}
Lastly, in Section \ref{sec:8}, we prove Theorem \ref{T2} also as a corollary to the work in Sections \ref{sec:3}--\ref{sec:6}.

\section{The BCZ map and the Poincar\'e section of Athreya and Cheung}\label{sec:2}

Let $\Omega\subseteq\R^2$ be the Farey triangle containing the points $(a,b)$ satisfying $0<a,b\leq1$ and $a+b>1$. The BCZ map $T:\Omega\rar\Omega$ is defined in \cite{BCZ} by
\[T(a,b)=\left(b,\left\lfloor\frac{1+a}{b}\right\rfloor b-a\right).\]
This map has relevance to Farey fractions as follows: Let 
\[\FF(Q)=\left\{\gamma_0=\frac{a_0}{q_0}=\frac{0}{1}<\gamma_1=\frac{a_1}{q_1}<\cdots<\gamma_{N(Q)}=\frac{a_{N(Q)}}{q_{N(Q)}}=\frac{1}{1}\right\}\]
with $(a_i,q_i)=1$. We then have $a_{i+2}=Ka_{i+1}-a_i$ and $q_{i+2}=Kq_{i+1}-q_i$, where $K=\lfloor\frac{Q+q_i}{q_{i+1}}\rfloor=\lfloor\frac{1+q_i/Q}{q_{i+1}/Q}\rfloor$. As a result, $\frac{q_{i+2}}{Q}=K\frac{q_{i+1}}{Q}-\frac{q_i}{Q}$, which is the second coordinate of $T(\frac{q_i}{Q},\frac{q_{i+1}}{Q}$). Hence we have the equality
\[T\left(\frac{q_i}{Q},\frac{q_{i+1}}{Q}\right)=\left(\frac{q_{i+1}}{Q},\frac{q_{i+2}}{Q}\right).\]
We therefore have a correspondence $\frac{a_i}{q_i}\leftrightarrow(\frac{q_i}{Q},\frac{q_{i+1}}{Q})$ between $\FF(Q)$ and the subset $\{(\frac{q_i}{Q},\frac{q_{i+1}}{Q}):\gamma_i\in\FF(Q)\}$, which we refer to as the set of Farey points, and $T$ maps a Farey point corresponding to a certain fraction to the Farey point corresponding to the succeeding fraction.

In \cite{AC}, the Farey triangle was viewed as a subset of $G/\Gamma$ by letting
\[P=\left\{p_{a,b}=\left(\begin{array}{cc}a&b\\0&a^{-1}\end{array}\right):(a,b)\in\Omega\right\}\]
and considering the set
\[\Omega'=P\Gamma/\Gamma=\{\Lambda_{a,b}=p_{a,b}\Gamma:(a,b)\in\Omega\},\]
which was found to be a Poincar\'e section for the horocycle flow, viewed as the action by left multiplication on $G/\Gamma$ of
\[N=\left\{h_s=\left(\begin{array}{cc} 1 & 0 \\ -s & 1\end{array}\right):s\in\R\right\}.\]
This means that for almost every $\Lambda\in G/\Gamma$ (with respect to the Haar measure), the set $\{s\in\R:h_s\Lambda\in\Omega'\}$ of times the orbit of $\Lambda$ under the horocycle flow meets $\Omega'$ is nonempty, countable, and discrete. The first return time function $R:\Omega'\rar\R$ defined by $R(\Lambda_{a,b})=\min\{s>0:h_s\Lambda_{a,b}\in\Omega'\}$ is $R(\Lambda_{a,b})=\frac{1}{ab}$, and the first return map $r:\Omega'\rar\Omega'$ defined by $r(\Lambda_{a,b})=h_{r(\Lambda_{a,b})}\Lambda_{a,b}$ is $r(\Lambda_{a,b})=\Lambda_{T(a,b)}$, where $T$ is the BCZ map. The last equality can be seen by the calculation
\begin{align*}
h_{R(\Lambda_{a,b})}\Lambda_{a,b}&=
\left(\begin{array}{cc}1&0\\-\frac{1}{ab}&1\end{array}\right)\left(\begin{array}{cc}a&b\\0&a^{-1}\end{array}\right)\Gamma=\left(\begin{array}{cc}a&b\\-b^{-1}&0\end{array}\right)\left(\begin{array}{cc}0&-1\\1&\lfloor\frac{1+a}{b}\rfloor\end{array}\right)\Gamma\\
&=\left(\begin{array}{cc}b&\lfloor\frac{1+a}{b}\rfloor b-a\\0&b^{-1}\end{array}\right)\Gamma=\Lambda_{T(a,b)}.
\end{align*}
Also, if we identify $G/\Gamma$ with the set $\{(a,b,s):(a,b)\in\Omega,0\leq s<\frac{1}{ab}\}$ via the correspondence $h_s\Lambda_{a,b}\leftrightarrow(a,b,s)$, then $d\mu_{G/\Gamma}=2\,da\,db\,ds$, where $\mu_{G/\Gamma}$ is the Haar measure on $G/\Gamma$ such that $\mu_{G/\Gamma}(G/\Gamma)=\frac{\pi^2}{3}$.

Letting $I\subseteq[0,1]$ be a subinterval, $\FF_I(Q)=\FF(Q)\cap I$, and $N_I(Q)=\#\FF_I(Q)$, we define on $\Omega'$ the measure \[\rho_{Q,I}=\frac{1}{N_I(Q)}\sum_{i:\gamma_i\in I}\delta_{r^i(\Lambda_{1,1/Q})}=\frac{1}{N_I(Q)}\sum_{i:\gamma_i\in I}\delta_{\Lambda_{q_i/Q,q_{i+1}/Q}}.\]
Notice that $R(\Lambda_{q_i/Q,q_{i+1}/Q})=\frac{Q^2}{q_iq_{i+1}}=Q^2(\gamma_{i+1}-\gamma_i)$, and as a result,
\begin{equation}
\frac{\#\{\gamma_i\in\FF_I(Q):\frac{3}{\pi^2}Q^2(\gamma_{i+1}-\gamma_i)\in[0,c]\}}{N_I(Q)}=\rho_{Q,I}\left(R^{-1}\left[0,\frac{\pi^2}{3}c\right]\right)\label{e21}
\end{equation}
for all $c\geq0$. Now we have $N_I(Q)\sim\frac{3}{\pi^2}|I|Q^2$, and if $\gamma_{I,l}$ and $\gamma_{I,g}$ are the least and greatest elements in $\FF_I(Q)$, respectively (we suppress the dependence on $Q$), then $\gamma_{I,g}-\gamma_{I,l}\rar|I|$ as $Q\rar\infty$. So the limit of the left side of \eqref{e21} as $Q\rar\infty$ is the measure of $[0,c]$ under the limiting gap measure of $(\FF_I(Q))$. So to show that the limiting gap measure of $(\FF_I(Q))$ exists, it suffices to prove that the limit of the right side of \eqref{e21} exists. To do so, Athreya and Cheung proved that the sequence $(\rho_{Q,I})$ of measures converges in the weak* topology to the measure $m$ on $\Omega'$ given by $dm=2\,da\,db$. They first noticed that if $\rho_{Q,I}^R$ is the measure on $G/\Gamma$ defined by $d\rho_{Q,I}^R=d\rho_{Q,I}\,ds$, where we are viewing $G/\Gamma$ as the set $\{(\Lambda_{a,b},s)\in\Omega'\times\R:0\leq s<\frac{1}{ab}\}$ by the correspondence $h_s\Lambda_{a,b}\leftrightarrow(\Lambda_{a,b},s)$, then $\rho_{Q,I}^R\rar\mu_{G/\Gamma}$ in the weak* topology. This convergence is a consequence of the equidistribution of closed horocycles in $G/\Gamma$ (see, e.g., \cite{Z}, \cite{Sa}, \cite{EM2}, \cite{Hj}, \cite{St}).

It then follows that if $\pi_{\Omega'}:G/\Gamma\rar\Omega'$ is the projection $(\Lambda_{a,b},s)\mapsto\Lambda_{a,b}$ (we are again viewing $G/\Gamma$ as $\{(\Lambda_{a,b},s)\in\Omega'\times\R:0\leq s<\frac{1}{ab}\}$), then \[\frac{1}{r}\pi_{\Omega'*}\rho_{Q,I}^R\rar\frac{1}{r}\pi_{\Omega'*}\mu_{G/\Gamma}\quad(Q\rar\infty)\] in the weak* topology. It is easy to see that $\rho_{Q,I}=\frac{1}{r}\pi_{\Omega'*}\rho_{Q,I}^R$ and $m=\frac{1}{r}\pi_{\Omega'*}\mu_{G/\Gamma}$, and so $\rho_{Q,I}\rar m$.

\begin{remark} The convergence $\rho_{Q,I}\rar m$ was proven in \cite{KZ} in the case $I=[0,1]$. This can actually be proven in an elementary way using M\"obius summation. For short subintervals $I=I(Q)\subseteq[0,1]$ with $|I(Q)|\gg Q^{-1/2+\epsilon}$, this convergence can be deduced using a corollary of the Weil bound for Kloosterman sums (see, e.g., \cite[Section 2]{BGZ}). For fixed $I$, this convergence also follows from \cite[Theorem 6]{M1}, in which the equidistribution of Farey points of arbitrary dimension was proven. See \cite{M2} for results regarding the spacing statistics of higher-dimensional Farey fractions. Using $\rho_{Q,I}\rar m$, Athreya and Cheung \cite{AC} not only explained the gap distribution of $(\FF_I(Q))$, but through finding appropriate functions $f:\Omega'\rar\R$ such that \[\lim_{Q\rar\infty}\int_{\Omega'}f\,d\rho_{Q,I}=\int_{\Omega'}f\,dm,\]
they were able to recast in this unified setting many previously known results about Farey fractions, including results on their $h$-spacings and indices.
\end{remark}

\section{The density of $\bigcup_{Q\in\N}\FF_M(Q)$ in $[0,1]$}\label{sec:3}

Throughout this section and Sections \ref{sec:4}--\ref{sec:6}, let $H\subseteq\Gamma$ be a subgroup of finite index, $M=\{m_1H,\ldots,m_kH\}\subseteq\Gamma/H$ be a nonempty subset closed under left multiplication by $\left(\begin{smallmatrix}1&-1\\0&1\end{smallmatrix}\right)$, and $I=[x_1,x_2]\subseteq[0,1]$ be a subinterval. We now set out to prove that the limiting gap measure $\nu_{I,M}$ of $(\FF_{I,M}(Q))$ exists and start in this section by proving that $\bigcup_{Q\in\N}\FF_M(Q)$ is dense in $[0,1]$. We first prove the following elementary lemma:

\begin{lemma}\label{L1}
Let $gH$ be any coset in $\Gamma/H$. Then there exist positive integers $a,b,c,d$ such that \[\left(\begin{array}{cc}a&b\\-c&-d\end{array}\right)\in gH.\]
\end{lemma}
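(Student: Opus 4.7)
The plan is to prove the lemma in three steps. First, since $[\Gamma:H]<\infty$, the infinite cyclic subgroup $\langle T\rangle$ (where $T=\left(\begin{smallmatrix}1&1\\0&1\end{smallmatrix}\right)$) intersects $H$ in a subgroup of finite index, so there exists $n\geq 1$ with $T^n\in H$. Similarly, $(T')^m\in H$ for some $m\geq 1$, where $T'=\left(\begin{smallmatrix}1&0\\1&1\end{smallmatrix}\right)$.

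Second, I reduce the problem to finding $h_0\in H$ such that the first column of $gh_0$ lies in the open fourth quadrant $Q=\{(x,y)\in\R^2:x>0,\,y<0\}$. Writing the first column of $gh_0$ as $(a,-c)^T$ with $a,c>0$ and the second as $(b',d')^T$, for $K\in\N$ large enough the matrix $gh_0 T^{nK}$ has the same first column while its second column is $(anK+b',\,-cnK+d')^T$, which has sign pattern $(+,-)$ for large $K$. Since $h_0T^{nK}\in H$, this yields the desired representative.

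The third and main step is to produce such an $h_0$; equivalently, to show that the orbit $He_1\subset\Z^2\setminus\{0\}$ (with $e_1=(1,0)^T$) meets the open cone $g^{-1}(Q)$. The approach combines two observations: (i) being a finite-index subgroup of the lattice $\Gamma\subset\SL(2,\R)$, $H$ is a non-elementary Fuchsian group, so its orbits on the projective boundary $\partial\mathbb{H}$ are dense; in particular, the projective image of $He_1$ is dense there. (ii) The orbit $He_1$ is not contained in any closed half-plane through the origin: an elementary computation with the vectors $(1,0),\,(1,\pm m),\,(1-nm,m)\in He_1$ (coming from $T^n,\,(T')^{\pm m},\,T^{-n}(T')^mT^n\in H$, together with one or two additional elements in the small cases $nm\leq 2$) shows that no nonzero linear functional can be nonnegative on all of $He_1$. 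Given (i) and (ii), I will pick a hyperbolic $W\in H$ whose attracting fixed point projects into the image of $g^{-1}(Q)$, and then an $h'\in H$ with $h'e_1$ on the appropriate side of $W$'s repelling eigenline; for $k$ large, $W^k h' e_1$ then lies in $g^{-1}(Q)$, and $h_0:=W^k h'\in H$ is as required. The main obstacle I anticipate is this last sign-control step: the projective density in (i) only pins down directions modulo $\pm 1$, and the half-plane-avoidance in (ii) is precisely what lets one ensure the resulting vector lands in $g^{-1}(Q)$ itself rather than in $-g^{-1}(Q)$.
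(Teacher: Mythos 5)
Your argument takes a genuinely different route from the paper's. The paper proves Lemma~\ref{L1} by an entirely elementary case analysis: starting from a representative $A\in gH$, it multiplies on the right by suitable powers of $U_N,L_N\in H$ and by a cleverly chosen auxiliary matrix $B\in H$ (a power of a positive matrix whose column slopes bracket those of $A$), adjusting the signs of the entries a few at a time until the pattern $\left(\begin{smallmatrix}+&+\\-&-\end{smallmatrix}\right)$ is reached. You instead recast the problem as showing that the orbit $He_1$ meets the open cone $g^{-1}(Q)$ and argue dynamically: since $H$ is of finite index in $\Gamma$ it is a lattice, hence a non-elementary Fuchsian group with full limit set, and you combine density of the $H$-orbit of $[e_1]$ in $\mathbb{P}^1(\R)$, the half-plane-avoidance property of $He_1$, and a hyperbolic contraction argument to resolve the projective sign ambiguity. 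The plan is sound: once $h'e_1=\alpha v_+ +\beta v_-$ has $\alpha>0$ relative to a hyperbolic $W\in H$ whose attracting eigenvector $v_+$ lies in the open cone, dividing $W^kh'e_1$ by the positive scalar $\alpha\lambda^k$ shows it enters the cone for $k$ large. Two details should be tightened: step~(i) as stated gives density of the orbit of $[e_1]$, not of hyperbolic fixed points, so you need either the standard fact that hyperbolic fixed points are dense in the limit set, or else conjugate a single hyperbolic $W_0\in H$ by an $h\in H$ that moves $[v_+(W_0)]$ into the desired arc (which follows from the orbit density you do have); and your verification of (ii) can be streamlined to avoid the $nm\leq 2$ case-splitting by using $T^{-nK}(T')^me_1=(1-nmK,m)^T$ with $K\to\infty$ together with $(1,\pm m)^T$, which forces any nonnegative functional to vanish. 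The tradeoff is clear: your argument is more conceptual, explaining the lemma as a density phenomenon in $\mathbb{P}^1(\R)$, but it relies on Fuchsian-group background; the paper's argument is longer and less illuminating but entirely self-contained, using only explicit matrix multiplication.
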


\begin{proof}
First note that since $[\Gamma:H]<\infty$, there exists an integer $N\geq2$ such that \[U_N=\left(\begin{array}{cc}1&N\\0&1\end{array}\right),\quad L_N=\left(\begin{array}{cc}1&0\\N&1\end{array}\right)\in H.\] Let $A=\left(\begin{smallmatrix}a_0&b_0\\c_0&d_0\end{smallmatrix}\right)\in gH$. If $a_0>0$ and $b_0=0$, then $A=\left(\begin{smallmatrix}1&0\\c_0&1\end{smallmatrix}\right)$. Replacing $A$ by $AL_N^{-j}$ for a large enough $j$ replaces $c_0$ by a number less than $-1$, and so we can assume that $c_0<-1$. We then have $AU_N=\left(\begin{smallmatrix}1&N\\c_0&c_0N+1\end{smallmatrix}\right)$, which is a matrix of the desired form. So the proof is complete in this case.

So assume that $a_0\leq0$ or $b_0\neq0$. If $b_0=0$, we must have $a_0<0$, and if $a_0=0$ so that $b_0\neq0$, multiplying $A$ on the right by $L_N$ or $L_N^{-1}$ replaces $a_0$ by a negative number. So we can assume that $a_0<0$. Then multiplying $A$ on the right by $U_N^{-j}$ for a large enough $j$ replaces $b_0$ by a positive number, and so assume that $b_0>0$. Now since $a_0d_0-b_0c_0=1$, we clearly have $c_0d_0\leq0$. Suppose that $d_0=0$, implying that $A=\left(\begin{smallmatrix}a_0&1\\-1&0\end{smallmatrix}\right)$. Multiplying $A$ on the right by $L_N^jU_N$ yields $\left(\begin{smallmatrix}a_0+jN&N(a_0+jN)+1\\-1&-N\end{smallmatrix}\right)$. Choosing $j$ so that $a_0+jN>0$ yields a matrix of the desired form, and so the proof is complete in this case. If $c_0=0$, then $A=\left(\begin{smallmatrix}-1&b_0\\0&-1\end{smallmatrix}\right)$, and multiplying $A$ on the right by $L_N^{-1}$ yields $\left(\begin{smallmatrix}-1-b_0N&b_0\\N&-1\end{smallmatrix}\right)$.

Thus we have reduced the case where $c_0=0$ to the situation where $A=\left(\begin{smallmatrix}-a&b\\c&-d\end{smallmatrix}\right)$ with $a,b,c,d>0$, which we now consider. Let $\frac{\gamma}{\alpha}<\frac{\delta}{\beta}$ be fractions such that $\alpha\delta-\beta\gamma=1$ and $\frac{a}{b}<\frac{\gamma}{\alpha}$. Matrix multiplication reveals that any power of $B=\left(\begin{smallmatrix}\alpha&\beta\\\gamma&\delta\end{smallmatrix}\right)$ is of the form $\left(\begin{smallmatrix}\alpha'&\beta'\\\gamma'&\delta'\end{smallmatrix}\right)$ where $\frac{\gamma}{\alpha}\leq\frac{\gamma'}{\alpha'}<\frac{\delta'}{\beta'}\leq\frac{\delta}{\beta}$. So noting that $[\Gamma:H]<\infty$, we can replace $B$ by some power of $B$ that is in $H$. We then have $AB\in gH$ and
\[AB=\left(\begin{array}{cc}-a&b\\c&-d\end{array}\right)\left(\begin{array}{cc}\alpha&\beta\\\gamma&\delta\end{array}\right)=\left(\begin{array}{cc}-a\alpha+b\gamma&-a\beta+b\delta\\c\alpha-d\gamma&c\beta-d\delta\end{array}\right),\]
which is a matrix of the desired form since $\frac{c}{d}<\frac{a}{b}<\frac{\gamma}{\alpha}<\frac{\delta}{\beta}$.

The last case we need to consider is when $A=\left(\begin{smallmatrix}-a&b\\-c&d\end{smallmatrix}\right)$ with $a,b,c,d>0$. Noting that $\frac{a}{b}<\frac{c}{d}$ since $-ad+bc=1$, we can find fractions $\frac{\gamma}{\alpha}<\frac{\delta}{\beta}$ such that $\alpha\delta-\beta\gamma=1$ and $\frac{a}{b}<\frac{\gamma}{\alpha}<\frac{\delta}{\beta}<\frac{c}{d}$. As in the previous case, we may assume that $B=\left(\begin{smallmatrix}\alpha&\beta\\\gamma&\delta\end{smallmatrix}\right)\in H$. Then $AB\in gH$ and
\[AB=\left(\begin{array}{cc}-a&b\\-c&d\end{array}\right)\left(\begin{array}{cc}\alpha&\beta\\\gamma&\delta\end{array}\right)=\left(\begin{array}{cc}-a\alpha+b\gamma&-a\beta+b\delta\\-c\alpha+d\gamma&-c\beta+d\delta\end{array}\right)\]
is a matrix of the desired form. This completes the proof.
\end{proof}

So by Lemma \ref{L1}, there exist $a,b,c,d\in\N$ such that $\left(\begin{smallmatrix}a&b\\-c&-d\end{smallmatrix}\right)H\in M$. Multiplying $\left(\begin{smallmatrix}a&b\\-c&-d\end{smallmatrix}\right)$ on the right by $L_N$ yields a matrix of the form $\left(\begin{smallmatrix}q'&a'\\-q&-a\end{smallmatrix}\right)$, where $\frac{a}{q}<\frac{a'}{q'}$ are consecutive Farey fractions of some order. We have $\left(\begin{smallmatrix}q'&a'\\-q&-a\end{smallmatrix}\right)H\in M$, proving that $\bigcup_{Q\in\N}\FF_M(Q)$ is nonempty.

\begin{lemma}\label{L2}
There exists constants $Y>0$ and $Q_0>0$, depending only on the subgroup $H\subseteq G$, such that for any $Q\geq Q_0$ and $x\in[0,1]$,
\[\min_{\beta\in\FF_M(Q)}|x-\beta|\leq\frac{Y}{Q}.\]
\end{lemma}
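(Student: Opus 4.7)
The plan is to refine Lemma \ref{L1} so that, given a Farey neighbor pair $\frac{p}{r}<\frac{p'}{r'}$ in $[0,1]$ with $p'r-pr'=1$ and a coset $g'H\in\Gamma/H$, one can produce a Farey-form matrix in $g'H$ whose associated first fraction lies inside the interval $(\frac{p}{r},\frac{p'}{r'})$. Given such a pair, let $T=\left(\begin{smallmatrix}p'&p\\r'&r\end{smallmatrix}\right)\in\SL(2,\Z)$, which realizes a M\"obius action sending $[0,\infty]$ onto $[\frac{p}{r},\frac{p'}{r'}]$. I would apply Lemma \ref{L1} to the coset $T^{-1}g'H$ to obtain a Farey-form matrix $A_T=\left(\begin{smallmatrix}q''&a''\\-q'''&-a'''\end{smallmatrix}\right)\in T^{-1}g'H$ with positive entries. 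A direct computation of $TA_T\in g'H$ shows that this product is itself in Farey form precisely when the entry ratios $q''/q'''$ and $a''/a'''$ both lie in the nonempty subinterval $(p/p',r/r')$ of $(0,\infty)$, in which case its associated Farey pair lies inside $(\frac{p}{r},\frac{p'}{r'})$. When the ratios lie outside this target interval, I would first right-multiply $A_T$ by an element of $\langle U_N,L_N\rangle\subseteq H$ (which preserves the coset $T^{-1}g'H$), exploiting the monotone dependence of $q''/q'''$ on $L_N^j$-multiplication and of $a''/a'''$ on $U_N^k$-multiplication to place the ratios inside $(p/p',r/r')$.

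To deduce the $Y/Q$-density statement, observe that since $\Gamma/H$ is finite, the auxiliary matrices $A_T$ can be taken from a fixed finite family of Farey-form representatives (one per coset of $\Gamma/H$), so their entries are bounded by a constant $C_H$ depending only on $H$, with the adjustment step contributing a further bound $C_H'$. For any $x\in[0,1]$ and $Q\geq Q_0$, I would invoke Dirichlet's theorem to choose Farey neighbors $\frac{p}{r}<\frac{p'}{r'}\in\FF(\lfloor Q/C_H'\rfloor)$ bracketing $x$, and then apply the refinement with $g'H=gH\in M$ (nonempty by the discussion preceding the lemma). The resulting Farey matrix in $gH$ has denominators at most $\|T\|\cdot C_H'\lesssim\max(r,r')\cdot C_H'\leq Q$, so its associated fraction lies in $\FF_M(Q)$ within distance $\frac{1}{rr'}\leq\frac{C_H'}{Q}$ of $x$, establishing the lemma with $Y=C_H'$.

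The main obstacle is controlling the cost of the entry-ratio adjustment: the exponents $j,k$ needed to bring the ratios of $A_T$ inside the interval $(p/p',r/r')$ can, a priori, depend on the Farey data $(p,p',r,r')$ and grow with these parameters, which would inflate the size of the adjusted matrix beyond $C_H'\cdot\max(r,r')$. Resolving this requires either a careful monotonicity analysis showing that the adjustment can always be completed with exponents bounded in terms of $H$ alone, or the use of additional elements of $H$ beyond $\langle U_N,L_N\rangle$, available since $H$ is a lattice in $G$ and hence acts with dense orbits on $\partial\mathbb{H}$.
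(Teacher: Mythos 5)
Your proposal stalls exactly at the obstacle you name in your last paragraph, and that obstacle is not a technicality: it is the whole content of the lemma. The condition you need after conjugating by $T$ is that the column ratios $q''/q'''$ and $a''/a'''$ of the (adjusted) representative lie in the interval $(p/p',r/r')$, whose length is $\frac{1}{p'r'}$ and whose location varies with $x$ and $Q$; it shrinks like $Q^{-2}$. A fixed finite family of adjusted representatives yields only a fixed finite set of rational ratio pairs, and one can exhibit Farey neighbour pairs whose ratio interval eventually avoids any fixed finite set: for instance $\frac{Q_1-2}{Q_1-1}<\frac{Q_1-1}{Q_1}$ are consecutive in $\FF(Q_1)$ and give the ratio interval $\bigl(\frac{Q_1-2}{Q_1-1},\frac{Q_1-1}{Q_1}\bigr)$, which for large $Q_1$ contains no member of any prescribed finite set of rationals. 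So your first proposed resolution (exponents of $U_N,L_N$ bounded in terms of $H$ alone) is impossible as stated; and if the exponents are allowed to grow with the Farey data, then $\|A_TW\|$ grows with $Q$, the denominator of the fraction attached to $TA_TW$ exceeds any fixed multiple of $\max(r,r')$, the fraction need not lie in $\FF_M(Q)$, and the bound $Y/Q$ is lost. Since neither resolution is carried out, the argument as written does not prove the lemma.

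The paper's proof gets the missing uniformity by replacing your interval condition with a sign-pattern condition that is independent of the interval. For each of the finitely many cosets of $\Gamma/H$ it fixes a representative of the form $\left(\begin{smallmatrix}a_0&-b_0\\-c_0&d_0\end{smallmatrix}\right)$ with positive entries bounded by a constant $k_H$ (obtained from Lemma \ref{L1} by right multiplication by powers of $\left(\begin{smallmatrix}1&-N\\0&1\end{smallmatrix}\right)$). Applying this to the coset of the bracketing matrix $\left(\begin{smallmatrix}q&a\\q'&a'\end{smallmatrix}\right)$ itself, the inverse $\left(\begin{smallmatrix}d_0&b_0\\c_0&a_0\end{smallmatrix}\right)$ has all positive entries, so $\left(\begin{smallmatrix}d_0&b_0\\c_0&a_0\end{smallmatrix}\right)\left(\begin{smallmatrix}q&a\\q'&a'\end{smallmatrix}\right)\in H$ has rows that are positive integer combinations of the rows of the bracketing matrix: its associated fractions automatically lie between $\frac{a}{q}$ and $\frac{a'}{q'}$, whatever the interval is, with denominators at most $2k_HQ$. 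One further left multiplication by a bounded representative $\left(\begin{smallmatrix}a_1&b_1\\-c_1&-d_1\end{smallmatrix}\right)$ of a coset in $M$ lands in $M$, keeps the fraction inside the interval, and bounds the denominator by $4k_H^2Q$; replacing $Q$ by $\lfloor Q/4k_H^2\rfloor$ gives the lemma with $Y=Q_0=8k_H^2$. In short, positivity of fixed bounded representatives does the work that your construction asks a $Q$-dependent ratio adjustment to do, and that is precisely the step your proposal leaves open.
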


\begin{proof}
 By Lemma 1, there exists a matrix of the form $\left(\begin{smallmatrix}a&b\\-c&-d\end{smallmatrix}\right)$, with $a,b,c,d>0$, in each coset $gH$ in $\Gamma/H$. By multiplying each on the right by a sufficiently large power of $\left(\begin{smallmatrix}1&-N\\0&1\end{smallmatrix}\right)$, we can say that each coset in $\Gamma/H$ also contains a matrix of the form $\left(\begin{smallmatrix}a&-b\\-c&d\end{smallmatrix}\right)$, with $a,b,c,d>0$. Let $k_H>0$ be an upper bound on the absolute values of entries of coset representatives in $\Gamma/H$ of the forms $\left(\begin{smallmatrix}a&b\\-c&-d\end{smallmatrix}\right)$ and $\left(\begin{smallmatrix}a&-b\\-c&d\end{smallmatrix}\right)$.

Now let $x\in[0,1]$ be given. Then for $Q\in\N$, let $\frac{a}{q}<\frac{a'}{q'}$ be consecutive in $\FF(Q)$ such that $\frac{a}{q}\leq x\leq\frac{a'}{q'}$, and note that the difference between $x$ and both $\frac{a}{q}$ and $\frac{a'}{q'}$ is at most $\frac{1}{Q}$. Thus any fraction between $\frac{a}{q}$ and $\frac{a'}{q'}$ is at most $\frac{1}{Q}$ away from $x$.

By our comments above, the coset $\left(\begin{smallmatrix}q&a\\q'&a'\end{smallmatrix}\right)H$ contains an element of the form $\left(\begin{smallmatrix}a_0&-b_0\\-c_0&d_0\end{smallmatrix}\right)$, with $0<a_0,b_0,c_0,d_0\leq k_H$. Thus $H$ contains
\[\left(\begin{array}{cc}a_0&-b_0\\-c_0&d_0\end{array}\right)^{-1}\left(\begin{array}{cc}q&a\\q'&a'\end{array}\right)=\left(\begin{array}{cc}d_0&b_0\\c_0&a_0\end{array}\right)\left(\begin{array}{cc}q&a\\q'&a'\end{array}\right)=\left(\begin{array}{cc}v&u\\v'&u'\end{array}\right),\]
where $\frac{u}{v}<\frac{u'}{v'}$ are consecutive Farey fractions of some order between $\frac{a}{q}$ and $\frac{a'}{q'}$ such that $v,v'\leq 2k_HQ$.
If $\left(\begin{smallmatrix}a_1&b_1\\-c_1&-d_1\end{smallmatrix}\right)$ is a representative of any coset $m_iH$ in $M$ with $0<a_1,b_1,c_1,d_1\leq k_H$, then $m_iH$ contains
\[\left(\begin{array}{cc}a_1&b_1\\-c_1&-d_1\end{array}\right)\left(\begin{array}{cc}v&u\\v'&u'\end{array}\right)=\left(\begin{array}{cc}a_1v+b_1v'&a_1u+b_1u'\\-c_1v-d_1v'&-c_1u-d_1u'\end{array}\right).\]
Thus $\FF_M(Q')$ contains $\frac{c_1u+d_1u'}{c_1v+d_1v'}$ for $Q'\geq\max\{c_1u+d_1u',c_1v+d_1v'\}$. Since $\max\{c_1u+d_1u',c_1v+d_1v'\}\leq4k_H^2Q$, $\FF_M(Q')$ contains $\frac{c_1u+d_1u'}{c_1v+d_1v'}$  for $Q'\geq 4k_H^2Q$. Thus the minimum distance between $x$ and an element in $\FF_M(4k_H^2Q)$ is at most $\frac{1}{Q}$. By letting $Q\geq8k_H^2$ and replacing $Q$ by $\left\lfloor\frac{Q}{4k_H^2}\right\rfloor$ in this argument, we see that the minimum distance between $x$ and an element in $\FF_M(Q)$ is at most $\frac{1}{(Q/4k_H^2)-1}\leq\frac{8k_H^2}{Q}$. This completes the proof with $Y=Q_0=8k_H^2$.
\end{proof}

This immediately implies the density of $(\FF_M(Q))$.

\begin{corollary}
The set $\bigcup_{Q\in\N}\FF_M(Q)$ is dense in $[0,1]$.
\end{corollary}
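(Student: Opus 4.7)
The plan is to deduce the corollary as an immediate consequence of Lemma \ref{L2}, essentially by a standard epsilon-argument. Fix an arbitrary $x\in[0,1]$ and $\epsilon>0$. By Lemma \ref{L2}, there exist constants $Y,Q_0>0$ (depending only on $H$) such that for every $Q\geq Q_0$ one can find $\beta\in\FF_M(Q)$ with $|x-\beta|\leq Y/Q$. Choosing $Q$ to be any integer satisfying $Q\geq\max\{Q_0,\,\lceil Y/\epsilon\rceil\}$ then produces $\beta\in\FF_M(Q)\subseteq\bigcup_{Q'\in\N}\FF_M(Q')$ with $|x-\beta|\leq Y/Q\leq\epsilon$. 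Since $x$ and $\epsilon$ were arbitrary, this establishes density in $[0,1]$.

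No obstacle arises here, because all the real work, namely producing an element of $\FF_M(Q)$ within distance $O(1/Q)$ of an arbitrary point of $[0,1]$, has already been done inside the proof of Lemma \ref{L2}. The only thing the corollary adds is the qualitative reformulation: density follows the moment one has any quantitative upper bound on the distance from $x$ to $\FF_M(Q)$ that tends to zero with $Q$. In particular, no additional use of the coset-representative analysis from Lemma \ref{L1} or of the structure of the subgroup $H$ is needed at this stage.
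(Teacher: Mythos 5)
Your proof is correct and is exactly what the paper intends: the corollary is stated immediately after Lemma \ref{L2} with the remark that it ``immediately implies the density,'' and your $\epsilon$-argument simply makes that deduction explicit. Nothing further is needed.
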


With Lemma \ref{L2} and the horocycle equidistribution results of Hejhal \cite{Hj}, and later Str\"ombergsson \cite{St}, it is conceivable that one can obtain results while allowing the subinterval $I$ to shrink with $Q$ and $n$ in Theorems \ref{T1} and \ref{T2}, respectively.

\section{A Poincar\'e section for $G/H$}\label{sec:4}

In the same way the properties of $\Omega'$ as a Poincar\'e section of the horocycle flow on $G/\Gamma$ were used in \cite{AC} to deduce many consequences for the gaps in $(\FF(Q))$, we now find a new space $\Omega_M$ that is a Poincar\'e section of the horocycle flow on $G/H$ which can be used to analyze the gaps in $\FF_M(Q)$. One step toward this goal is to lift the Poincar\'e section $\Omega'$ to $G/H$ via the natural projection $\pi:G/H\rar G/\Gamma$. In order to do this, we use the work of Fisher and Schmidt \cite{FS} on the behavior of a lifted Poincar\'e section for the geodesic flow from $F\bsl\PSL(2,\R)$, $F\subseteq\PSL(2,\R)$ being a Fuchsian group of finite covolume, to a finite cover $F'\bsl\PSL(2,\R)$ of $F\bsl\PSL(2,\R)$. In particular, we apply \cite[Lemma 2, Theorem 3]{FS} to the finite cover $\pi:G/H\rar G/\Gamma$, lifting the Poincar\'e section $\Omega'$ to $\Omega'':=\pi^{-1}(\Omega')$. In applying Theorem 3, we make the slight modifications of working in the left coset space instead of the right coset space, replacing the geodesic flow with the horocycle flow, and allowing the possibility that $-I\notin H$, so that $G/H$ is not necessarily of the form $\PSL(2,\R)/F'$. We summarize the results we need from this application in the following theorem:

\begin{theorem}
The set \[\Omega'':=\pi^{-1}(\Omega')=P\Gamma H/H=\{p_{a,b}gH:p_{a,b}\in P,g\in\Gamma\}\] is a Poincar\'e section for the action of $N$ on $G/H$ with first return time function $R\circ\pi:\Omega''\rar\R$ and first return map $r':\Omega''\rar\Omega''$ such that, for all $p_{a,b}\in P$ and $g\in\Gamma$,
\begin{align}
r'(p_{a,b}\gamma H)&=h_{R(\pi(p_{a,b}\gamma H))}p_{a,b}\gamma H\notag\\
&=\left(\begin{array}{cc} 1 & 0 \\ -\frac{1}{ab} & 1\end{array}\right)\left(\begin{array}{cc} a & b \\ 0 & a^{-1}\end{array}\right)\left(\begin{array}{cc} 0 & -1 \\ 1 & \lfloor\frac{1+a}{b}\rfloor\end{array}\right)\left(\begin{array}{cc} 0 & -1 \\ 1 & \lfloor\frac{1+a}{b}\rfloor\end{array}\right)^{-1}\gamma H\notag\\
&=\left(\begin{array}{cc} b & \lfloor\frac{1+a}{b}\rfloor b-a \\ 0 & b^{-1}\end{array}\right)\left(\begin{array}{cc} \lfloor\frac{1+a}{b}\rfloor & 1 \\ -1 & 0\end{array}\right)\gamma H.\label{e41}
\end{align}
Identify $\Omega''$ with $P\times\Gamma/H$ via the correspondence $p_{a,b}\gamma H\leftrightarrow(p_{a,b},\gamma H)$, and let $\mu_{\Omega''}$ be the measure on $\Omega''$ corresponding in this way to the product measure on $P\times\Gamma/H$ of $2\,da\,db$ with the counting measure on $\Gamma/H$. Then identify $G/H$ with $\{(x,s)\in\Omega''\times\R:0\leq s<(R\circ\pi)(x)\}$ via $(x,s)\leftrightarrow h_sx$. Then the Haar measure $\mu_{G/H}$ on $G/H$, normalized so that $\mu_{G/H}(G/H)=\frac{\pi^2}{3}[\Gamma:H]$, is given by $d\mu_{G/H}=d\mu_{\Omega''}\,ds$.
\end{theorem}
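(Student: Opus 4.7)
My approach would be to derive the statement as a direct consequence of the Poincar\'e section structure of $\Omega'$ on $G/\Gamma$ recalled in Section \ref{sec:2}, combined with the observation that $\pi : G/H \to G/\Gamma$ is a $[\Gamma:H]$-to-$1$ covering that intertwines the two horocycle actions. This is essentially the Fisher--Schmidt lifting construction, though their setting is phrased for the geodesic flow on right-coset spaces of $\PSL(2,\R)$; I would run the argument directly in the left-coset $\SL(2,\R)$ setup, where in particular one must allow the possibility $-I \notin H$.

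The Poincar\'e section property is immediate from $\pi \circ h_s = h_s \circ \pi$: for any $\Lambda H \in G/H$, $h_s\Lambda H$ lies in $\Omega''$ if and only if $h_s\pi(\Lambda H)$ lies in $\Omega'$, so the set of return times, and its first element, coincide with those for $\pi(\Lambda H)$. This gives the first return time $R \circ \pi$. For the first return map, I would apply $h_{1/(ab)}$ to $p_{a,b}\gamma H$ directly. The matrix computation from Section \ref{sec:2} gives
\[h_{1/(ab)}p_{a,b} = \left(\begin{array}{cc} b & \lfloor(1+a)/b\rfloor b-a \\ 0 & b^{-1}\end{array}\right)\left(\begin{array}{cc}\lfloor(1+a)/b\rfloor & 1 \\ -1 & 0\end{array}\right) = p_{T(a,b)}\,B,\]
with $B \in \Gamma$; multiplying through by $\gamma H$ then yields formula \eqref{e41}.

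For the measure identification, the map $P \times \Gamma/H \to \Omega''$, $(p_{a,b},\gamma H) \mapsto p_{a,b}\gamma H$, is a bijection on the interior of $\Omega$: indeed, if $p_{a,b}\gamma H = p_{a',b'}\gamma' H$, then $p_{a',b'}^{-1}p_{a,b}$ is simultaneously in $P$ and $\Gamma$, which on the interior of $\Omega$ forces $p_{a,b} = p_{a',b'}$ and then $\gamma H = \gamma' H$. Under the parametrization $(x,s) \leftrightarrow h_s x$ of $G/H$, the $N$-invariance of Haar measure together with the local product structure near $\Omega''$ forces the disintegration $d\mu_{G/H} = d\mu_{\Omega''}\,ds$ for a unique transverse measure $\mu_{\Omega''}$. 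Pushing this forward by $\pi$ and comparing with $d\mu_{G/\Gamma} = 2\,da\,db\,ds$, and noting that $\pi$ restricts to a homeomorphism on each sheet $P \times \{\gamma H\}$, identifies $\mu_{\Omega''}$ with $2\,da\,db$ times counting measure on $\Gamma/H$. The normalization then follows from
\[\mu_{G/H}(G/H) = \int_{\Omega''}(R\circ\pi)\,d\mu_{\Omega''} = [\Gamma:H]\int_{\Omega'}R\,dm = [\Gamma:H]\cdot\mu_{G/\Gamma}(G/\Gamma) = \frac{\pi^2}{3}[\Gamma:H].\]

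The main obstacle is the rigorous justification of the disintegration $d\mu_{G/H} = d\mu_{\Omega''}\,ds$: one must verify that a genuine transverse measure exists globally and that the boundary identifications on $\partial\Omega$ contribute no extra mass. Both facts already hold on $G/\Gamma$ by Athreya--Cheung and lift automatically via the covering $\pi$, so what remains is essentially bookkeeping to confirm that the Fisher--Schmidt construction goes through in the left-coset $\SL(2,\R)$ setting with possibly $-I \notin H$, rather than any new dynamical input.
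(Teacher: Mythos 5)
Your proposal is correct and takes essentially the same approach as the paper: both invoke the Fisher--Schmidt lifting construction applied to the covering $\pi:G/H\to G/\Gamma$, with the same noted modifications (left cosets, horocycle rather than geodesic flow, possibly $-I\notin H$). The paper states the theorem as a direct application of \cite[Lemma 2, Theorem 3]{FS} without reproducing the argument, whereas you unpack the intertwining $\pi\circ h_s=h_s\circ\pi$, the injectivity of $(p_{a,b},\gamma H)\mapsto p_{a,b}\gamma H$, and the normalization $\int_{\Omega''}(R\circ\pi)\,d\mu_{\Omega''}=[\Gamma:H]\int_{\Omega'}R\,dm$, all of which are correct (and in fact the injectivity argument works on all of $\Omega$, not just its interior, since $a=a'$ and $b,b'\in(1-a,1]$ already force $|b-b'|<a$).
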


Our next step is to find a correspondence of $\FF(Q)$ with points in $\Omega''$ analogous to that of $\FF(Q)$ with the Farey points in $\Omega$. So suppose that $\frac{a}{q}<\frac{a'}{q'}<\frac{a''}{q''}$ are consecutive fractions in $\FF(Q)$. Then letting $K=\lfloor\frac{Q+q}{q'}\rfloor$ and noting that $a''=Ka'-a$ and $q''=Kq'-q$, by \eqref{e41} we have
\begin{align*}
r'\left(\left(\begin{array}{cc} \frac{q}{Q} & \frac{q'}{Q} \\ 0 & \frac{Q}{q}\end{array}\right)\left(\begin{array}{cc} q' & a' \\ -q & -a\end{array}\right)H\right)&=\left(\begin{array}{cc} \frac{q'}{Q} & \frac{Kq'-q}{Q} \\ 0 & \frac{Q}{q'}\end{array}\right)\left(\begin{array}{cc} Kq'-q & Ka'-a \\ -q' & -a'\end{array}\right)H\\
&=\left(\begin{array}{cc} \frac{q'}{Q} & \frac{q''}{Q} \\ 0 & \frac{Q}{q'}\end{array}\right)\left(\begin{array}{cc} q'' & a'' \\ -q' & -a'\end{array}\right)H.
\end{align*}
Therefore, if we associate each fraction $\frac{a}{q}\in\mathcal{F}(Q)$ to the element \[W_{H,Q}(\frac{a}{q})=\left(\begin{array}{cc} \frac{q}{Q} & \frac{q'}{Q} \\ 0 & \frac{Q}{q}\end{array}\right)\left(\begin{array}{cc} q' & a' \\ -q & -a\end{array}\right)H\]
of $\Omega''$, where $\frac{a'}{q'}$ is the element succeeding $\frac{a}{q}$ in $\FF(Q)$, then $r'(W_{H,Q}(\frac{a}{q}))=W_{H,Q}(\frac{a'}{q'})$. The map $W_{H,Q}$ gives the correspondence of $\FF(Q)$ with $\Omega''$ that we are seeking, and $W_{H,Q}$ sends a fraction in $\FF(Q)$ to \[\Omega_M:=PM=\{pm_iH:p\in P,m_iH\in M\}\]
if and only if the fraction is in $\FF_M(Q)$. The set $\Omega_M$ is the Poincar\'e section in $G/H$ we set out to find at the beginning of this section.

To see that $\Omega_M$ is in fact a Poincar\'e section for the horocycle flow on $G/H$, notice that the set
\[h_{[-1,0]}\Omega_M=\{h_spm_iH:s\in[-1,0],pm_iH\in\Omega_M\}\subseteq G/H\]
has positive $\mu_{G/H}$-measure. Now by the ergodicity of the horocycle flow \cite{He}, $\mu_{G/H}$-a.e.~$x\in G/H$ is sent to $h_{[-1,0]}\Omega_M$ by $\{h_s:s>0\}$. Clearly all of $h_{[-1,0]}\Omega_M$ is sent to $\Omega_M$ by $\{h_s:s\geq0\}$, and so a.e.~$x\in G/H$ is sent to $\Omega_M$ by $\{h_s:s>0\}$. The discreteness of $\{s\in\R:h_sx\in\Omega_M\}$ for a.e.~$x\in G/H$ follows from the fact that $\Omega_M\subseteq\Omega''$. This proves that $\Omega_M$ is a Poincar\'e section for the action of $N$ on $G/H$.

Let $R_M:\Omega_M\rar(0,\infty]$ be the first return time function $R_M(x)=\min\{s>0:h_sx\in\Omega_M\}$ and $r_M$ be the first return map on $\Omega_M$ defined by $r_M(x)=h_{R_M(x)}(x)$. Here we note that if $\mu_{\Omega_M}=\frac{1}{\#M}\mu_{\Omega''}|_{\Omega_M}$, then $d\mu_{G/H}=(\#M)\,d\mu_{\Omega_M}\,ds$, where we identify $G/H$ with $\{(x,s)\in\Omega_M\times\R:0\leq s<R_M(x)\}$ by $h_sx\leftrightarrow(x,s)$.

Now that we have identified a Poincar\'e section in $G/H$ and a correspondence $W_{H,Q}$ of $\FF_M(Q)$ to a subset of $\Omega_M$ analogous to the set of Farey points in $\Omega$, we wish to see if information about the gaps in $\FF_M(Q)$ can be deduced from $\Omega_M$. So let $\gamma_i\in\FF_M(Q)$ such that there are $i'>i$ with $\gamma_{i'}\in\FF_M(Q)$. Since $\Omega_M\subseteq\Omega''$, for each $x\in\Omega_M$ in which $r_M(x)$ is defined, $r_M(x)=r'^j(x)$ for some $j\in\N$. So \[r_M(W_{H,Q}(\gamma_i))=r'^j(W_{H,Q}(\gamma_i))=W_{H,Q}(\gamma_{i+j}),\]
where $j\in\N$ is the least element such that $W_{H,Q}(\gamma_{i+j})\in\Omega_M$, i.e., $\gamma_{i+j}\in\FF_M(Q)$. We then have
\begin{align}
R_M(W_{H,Q}(\gamma_i))&=\sum_{i'=0}^{j-1}(R\circ\pi)(r'^{i'}(W_{H,Q}(\gamma_i)))=\sum_{i'=0}^{j-1}\frac{Q^2}{q_{i+i'}q_{i+i'+1}}\notag\\
&=Q^2(\gamma_{i+j}-\gamma_i).\label{e42}
\end{align}
So just as the return time function $R$ on $\Omega'$ contained information about the gaps in $\FF(Q)$, $R_M$ contains information about the gaps in $\FF_M(Q)$.

Let $N_{I,M}(Q)=\#\FF_{I,M}(Q)-1$ and
\[\FF_{I,M}(Q)=\{\beta_0<\beta_1<\cdots<\beta_{N_{I,M}(Q)}\}.\]
For notational convenience, we suppress the dependence of the $\beta_i$ on $Q$. Then define the measure $\rho_{Q,I,M}$ on $\Omega_M$ by \[\rho_{Q,I,M}=\frac{1}{N_{I,M}(Q)}\sum_{i=0}^{N_{I,M}(Q)-1}\delta_{W_{H,Q}(\beta_i)}.\]
By \eqref{e42}, we have
\begin{align}
\frac{\#\{0\leq i\leq N_{I,M}(Q)-1:Q^2(\beta_{i+1}-\beta_i)\in[0,c]\}}{N_{I,M}(Q)}=\rho_{Q,I,M}(R_M^{-1}[0,c])\label{e43}
\end{align}
for all $c\geq0$. To show that the limit of the left side, and hence the right side, exists, we prove in Section \ref{sec:5} that the boundary $\partial(R_M^{-1}[0,c])$ of the set $R_M^{-1}[0,c]$ has $\mu_{\Omega_M}$-measure $0$, and then prove in Section \ref{sec:6} that $(\rho_{Q,I,M})$ converges weakly to $\mu_{\Omega_M}$ as $Q\rar\infty$. These results imply that
\[\lim_{Q\rar\infty}\rho_{Q,I,M}(R_M^{-1}[0,c])=\mu_{\Omega_M}(R_M^{-1}[0,c])\]
by the Portmanteau theorem (see \cite{B}). Note that the right side of \eqref{e43} is not the relevant limit to prove the existence of the limiting gap measure for $(\FF_{I,M}(Q))$. However, we have $\beta_{N_{I,M}(Q)}-\beta_0\rar|I|$ as $Q\rar\infty$ by the density of $\bigcup_{Q\in\N}\FF_M(Q)$ in $[0,1]$, and we show that $N_{I,M}(Q)\sim\frac{|I|(\#M)Q^2}{\mu_{G/H}(G/H)}=\frac{3|I|(\#M)Q^2}{\pi^2[\Gamma:H]}$ in the course of our work in Section \ref{sec:6}. It then follows that the limiting gap measure $\nu_{I,M}$ exists and satisfies
\begin{equation}
\nu_{I,M}([0,c])=\mu_{\Omega_M}\left(R_M^{-1}\left[0,\frac{\pi^2[\Gamma:H]}{3(\#M)}c\right]\right)\label{e44}
\end{equation} for all $c\geq0$. Another corollary is that $\lim_{Q\rar\infty}\frac{N_{I,M}(Q)}{N_{[0,1],M}(Q)}=|I|$ for every subinterval $I\subseteq[0,1]$, implying that $(\FF_M(Q))$ becomes equidistributed in $[0,1]$ as $Q\rar\infty$.

\section{The return time function $R_M$}\label{sec:5}

In this section, we prove important properties of the first return time function $R_M$. Specifically, we show that $\mu_{\Omega_M}(\partial(R_M^{-1}[0,c]))=0$ for every $c\geq0$, and the function $F_M:[0,\infty)\rar[0,1]$ defined by
\[F_M(c)=\mu_{\Omega_M}(R_M^{-1}[0,c])\] has a continuous, piecewise real-analytic derivative. These results, together with our work in Section \ref{sec:6}, proves that the limiting gap measure $\nu_{I,M}$ exists and has a continuous and piecewise real-analytic density. To do this, we show that $R_M$ is a piecewise rational function, viewing each component $Pm_iH/H$ of $\Omega_M$ as a copy of the Farey triangle $\Omega$ by the correspondence $p_{a,b}m_iH\leftrightarrow(a,b)$, and that the region in a given component $Pm_iH/H$ over which $R_M$ is equal to a certain rational function is a polygon. This allows us to say that $R_M^{-1}[0,c]$ is the union of regions, each being obtained by intersecting a polygon with a region bounded below by a hyperbola which depends in a smooth way on $c$. In particular, we show that $R_M^{-1}[0,c]$ is a finite union of these regions, which grants us the properties of $F_M$ we seek.

\subsection{$R_M$ is piecewise rational}\label{sec:5.1}

First let $p_{a,b}\in P$ and $m_iH\in M$ so that $p_{a,b}m_iH\in\Omega_M$, and suppose $s>0$. We have $h_sp_{a,b}m_iH\in\Omega_M$ if and only if there exist $p_{c,d}\in P$ and $m_jH\in M$ such that $h_sp_{a,b}m_iH=p_{c,d}m_jH$. This means there exists $h\in H$ such that $h_sp_{a,b}m_ihm_j^{-1}=p_{c,d}$. Letting $m_ihm_j^{-1}=\left(\begin{smallmatrix} c_1 & c_2 \\ c_3 & c_4\end{smallmatrix}\right)$, this equality is
\[\left(\begin{array}{cc} ac_1+bc_3 & ac_2+bc_4 \\ a^{-1}c_3-s(ac_1+bc_3) & a^{-1}c_4-s(ac_2+bc_4)\end{array}\right)=\left(\begin{array}{cc} c & d \\ 0 & c^{-1}\end{array}\right).\]
Thus $h_sp_{a,b}m_iH\in\Omega_M$ if and only if there exists $\left(\begin{smallmatrix}c_1&c_2\\ c_3&c_4\end{smallmatrix}\right)\in\bigcup_{j=1}^km_iHm_j^{-1}$ such that $(ac_1+bc_3,ac_2+bc_4)\in\Omega$ and $s=\frac{c_3}{a(ac_1+bc_3)}=\frac{1}{a(b+\frac{c_1}{c_3}a)}$. Note that the latter conditions and $s>0$ imply that $c_3>0$, and hence $c_1\leq0$ since $ac_1+bc_3\leq1$ and $a+b>1$. In particular, if $R_M(p_{a,b}m_iH)<\infty$, then $R_M(p_{a,b}m_iH)=\frac{1}{a(b+\frac{c_1}{c_3}a)}$, where $\frac{c_1}{c_3}$ is the greatest fraction such that $c_1\leq0$, $c_3>0$, and there exists $\left(\begin{smallmatrix}c_1&c_2\\ c_3&c_4\end{smallmatrix}\right)\in\bigcup_{j=1}^km_iHm_j^{-1}$ with $(ac_1+bc_3,ac_2+bc_4)\in\Omega$. We have thus proven the following result:

\begin{proposition}
The function $R_M$ is a piecewise rational function. Specifically,
\[R_M=\min_{C\in\bigcup_{i,j=1}^km_iHm_j^{-1}}f_C,\]
where for each $C=\left(\begin{smallmatrix}c_1&c_2\\ c_3&c_4\end{smallmatrix}\right)\in\bigcup_{i,j=1}^km_iHm_j^{-1}$, $f_C:\Omega_M\rar[0,\infty]$ is defined by
\[f_C(p_{a,b}m_iH)=\begin{cases}\frac{1}{a(b+\frac{c_1}{c_3}a)} & \mbox{if }C\in\bigcup_{j=1}^km_iHm_j^{-1},c_3>0,(\,a\,\,b\,)\left(\begin{smallmatrix}c_1&c_2\\c_3&c_4\end{smallmatrix}\right)\in\Omega\\
\infty & \mbox{otherwise.}
\end{cases}\]
\end{proposition}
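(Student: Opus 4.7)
The plan is to turn the matrix calculation preceding the statement into a formal argument. Fix a point $p_{a,b}m_iH\in\Omega_M$. The condition $h_sp_{a,b}m_iH\in\Omega_M$ for $s>0$ is, by definition of $\Omega_M=PM$, equivalent to the existence of $p_{c,d}\in P$, $m_jH\in M$, and $h\in H$ such that $h_sp_{a,b}m_ih=p_{c,d}m_j$; equivalently, setting $C:=m_ihm_j^{-1}$, the matrix $C$ lies in $\bigcup_{j=1}^k m_iHm_j^{-1}$ and $h_sp_{a,b}C=p_{c,d}$.

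Writing $C=\left(\begin{smallmatrix}c_1&c_2\\c_3&c_4\end{smallmatrix}\right)$ and multiplying out, the $(2,1)$-entry of $h_sp_{a,b}C$ must vanish since $p_{c,d}$ is upper triangular; this gives $s=c_3/[a(ac_1+bc_3)]$, while the top row forces $(c,d)=(ac_1+bc_3,ac_2+bc_4)\in\Omega$. Thus the set of positive return times is in bijection with the admissible matrices $C$ (those satisfying the Farey-triangle constraint), and the corresponding value of $s$ can be rewritten as $1/[a(b+(c_1/c_3)a)]$ after dividing numerator and denominator by $c_3$.

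Next I would extract the sign conditions. Because $(ac_1+bc_3,ac_2+bc_4)\in\Omega$ implies $ac_1+bc_3>0$ and $a>0$, positivity of $s$ forces $c_3>0$. Combining $c_3\geq 1$ with $ac_1+bc_3\leq 1$, $a+b>1$, and $0<a,b\leq 1$ then forces $c_1\leq 0$ because $c_1$ is an integer. Hence $f_C(p_{a,b}m_iH)=1/[a(b+(c_1/c_3)a)]$ is precisely the return time produced by the admissible matrix $C$, and by setting $f_C=\infty$ whenever $C\notin\bigcup_{j=1}^k m_iHm_j^{-1}$ or the admissibility conditions fail, the identity $R_M=\min_C f_C$ holds pointwise on $\Omega_M$.

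The one point that deserves care is whether this minimum is attained rather than merely an infimum. This is guaranteed by the discreteness of the return-time set $\{s>0:h_sx\in\Omega_M\}$ for $x\in\Omega_M$, which follows from $\Omega_M\subseteq\Omega''$ and the Poincar\'e section property of $\Omega''$ established in Section~\ref{sec:4}; equivalently, only finitely many admissible $C$ can contribute for any given $(a,b,m_i)$, since the Farey-triangle constraint bounds the integer entries of $C$ in terms of $a$ and $b$. This finite-minimum statement is exactly what makes $R_M$ piecewise rational, as needed for the subsequent analysis of the level sets $R_M^{-1}[0,c]$ in Section~\ref{sec:5}.
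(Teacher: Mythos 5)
Your argument is correct and is essentially the paper's own proof: the same reduction $h_sp_{a,b}m_ihm_j^{-1}=p_{c,d}$ with $C=m_ihm_j^{-1}\in\bigcup_{j=1}^km_iHm_j^{-1}$, the same vanishing-entry computation giving $s=\frac{c_3}{a(ac_1+bc_3)}=\frac{1}{a(b+\frac{c_1}{c_3}a)}$ together with $(ac_1+bc_3,ac_2+bc_4)\in\Omega$, and the same sign analysis forcing $c_3>0$ and $c_1\leq0$. One caveat: your parenthetical claim that only finitely many admissible $C$ can occur for a fixed $(a,b,m_i)$ because the Farey-triangle constraint bounds the integer entries of $C$ is false. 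For example, at $a=b=0.9$ the condition $(\,a\,\,b\,)C\in\Omega$ is satisfied by $C=\left(\begin{smallmatrix}-(k-1)&-k\\k&k+1\end{smallmatrix}\right)\in\Gamma$ for every $k\geq1$; more conceptually, each forward return of the orbit to the section produces a distinct admissible matrix, and generically there are infinitely many returns. The attainment of the minimum should therefore rest solely on your first justification: since $R\circ\pi=\frac{1}{ab}\geq1$ on $\Omega''$, successive visits to $\Omega''\supseteq\Omega_M$ are separated by at least $1$, so the admissible values $f_C(p_{a,b}m_iH)$ form a discrete set tending to infinity and the infimum is attained whenever some value is finite --- which is in any case how the paper defines $R_M$ as a minimum once $\Omega_M$ is known to be a Poincar\'e section.
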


Next, for a given fraction $\frac{c_1}{c_3}$ with $c_1\leq0$ and $c_3>0$, we wish to better understand the region
\[R_{-c_1/c_3}:=\left\{p_{a,b}m_iH\in\Omega_M:R_M(p_{a,b}m_iH)=\frac{1}{a(b+\frac{c_1}{c_3}a)}\right\}.\]
In particular, we want to prove the following technical result, which is a great aid in showing that $\mu_{\Omega_M}(\partial(R_M^{-1}[0,c]))=0$ for $c\geq0$ and that $F_M'$ is continuous and piecewise analytic.

\begin{proposition}
For each $i\in\{1,\ldots,k\}$, $R_{-c_1/c_3}\cap Pm_iH/H$ is either empty or a polygon, viewing $Pm_iH/H$ as the Farey triangle $\Omega$.
\end{proposition}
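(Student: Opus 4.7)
The plan is to identify $R_{-c_1/c_3}\cap Pm_iH/H$ (viewed as a subset of the Farey triangle $\Omega$ via $p_{a,b}m_iH\leftrightarrow(a,b)$) with the set cut out of a single polygonal strip in $\Omega$ by finitely many open half-planes. Write $c_1/c_3=-p/q$ in lowest terms with $p\geq 0$, $q\geq 1$. By Proposition 2, the set in question consists of those $(a,b)$ such that some $C\in T_i:=\bigcup_{j=1}^k m_iHm_j^{-1}$ with first column $(-p,q)^T$ satisfies $(a,b)C\in\Omega$, while no $C'\in T_i$ with first-column ratio strictly greater than $-p/q$ satisfies $(a,b)C'\in\Omega$.

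First I would classify the matrices in $T_i$ by their first columns. For each primitive pair $(-p',q')^T$ with $p'\geq 0$ and $q'\geq 1$, the matrices in $\Gamma$ with this first column form a family $\{C_0U^{-k}\}_{k\in\Z}$, where $U=\left(\begin{smallmatrix}1&1\\0&1\end{smallmatrix}\right)$ and $C_0$ is a chosen representative. Since $M$ is $U^{-1}$-invariant by hypothesis and every $\langle U\rangle$-orbit on the finite set $\Gamma/H$ is finite (so $M$ is automatically $\langle U\rangle$-invariant), the set of $k\in\Z$ with $C_0U^{-k}\in T_i$ is either empty or all of $\Z$; call $(p',q')$ \emph{active at $i$} in the latter case. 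Next, for each active $(p',q')$, I would establish $\bigcup_{k\in\Z}P_{C_0U^{-k}}=S_{p'/q'}$, where $S_{p'/q'}:=\{(a,b)\in\Omega:0<-p'a+q'b\leq 1\}$ is a polygon. The key computation: the second coordinate of $(a,b)C_0U^{-k}$ has the form $\beta_0-k\alpha$ with $\alpha=-p'a+q'b$, and the admissibility condition $(a,b)C_0U^{-k}\in\Omega$ (which forces $\beta_0-k\alpha\in(\max(0,1-\alpha),1]$, an interval of length $\alpha$) is met by exactly one $k\in\Z$ for each $(a,b)\in S_{p'/q'}$.

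Assembling, one obtains
\[R_{-c_1/c_3}\cap Pm_iH/H \;=\; S_{p/q}\;\setminus\;\bigcup_{\substack{(p',q')\text{ active at }i\\ p'/q'<p/q}}S_{p'/q'},\]
and this set is empty unless $(p,q)$ itself is active. Within $S_{p/q}$ the negation $(a,b)\notin S_{p'/q'}$ reduces to the single strict linear inequality $-p'a+q'b>1$ (the alternative $-p'a+q'b\leq 0$ is incompatible with $p'/q'<p/q$ and $(a,b)\in S_{p/q}$). Thus the set is the intersection of the polygon $S_{p/q}$ with a family of open half-planes, and the remaining task is to show that only finitely many of them contribute to the boundary. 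The central Diophantine input is the inequality $(pq'-p'q)a\leq q$, derived from
\[q(-p'a+q'b)-q'(-pa+qb)=(pq'-p'q)a\]
together with the fact that each parenthesized quantity lies in $(0,1]$. When $q\geq 2$, the polygon $S_{p/q}$ already forces $a\geq(q-1)/(p+q)>0$ (combining $a+b>1$ with the upper boundary $b\leq(p/q)a+1/q$), so the bound restricts $pq'-p'q$ to a finite range, leaving only finitely many candidates $(p',q')$. When $q=1$, $a$ is not bounded below in $S_p$; here one analyses the upper envelope of the affine functions $(p'/q')a+1/q'$ (over active $p'/q'<p$) restricted to the $a$-range of $S_p$, and verifies via a Stern--Brocot descent that this convex piecewise-linear envelope has only finitely many vertices in $(0,1/p]$. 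In either case the set reduces to the intersection of $S_{p/q}$ with finitely many open half-planes, hence is a convex polygon.

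The hard part will be the last finiteness statement in the $q=1$ regime: infinitely many strips $S_{p'/q'}$ may intersect $S_p$ nontrivially, so one must argue carefully via the Farey/Stern--Brocot structure of the rationals below $p$ that the upper envelope of the corresponding affine functions admits only finitely many edges inside $S_p$; the stronger $q\geq 2$ cases and the reduction steps above are essentially mechanical by comparison.
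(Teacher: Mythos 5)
Your reduction of the region to $S_{p/q}\setminus\bigcup_{p'/q'\text{ active},\,p'/q'<p/q}S_{p'/q'}$ is correct and is in fact a cleaner way to reach the strip decomposition than the paper's route: the observation that finiteness of $\Gamma/H$ automatically upgrades $U^{-1}$-invariance of $M$ to full $\langle U\rangle$-invariance, so that the union of the slabs $P_{C_0U^{-k}}$ over all $k\in\Z$ is precisely $S_{p'/q'}$, is a genuine simplification. The paper instead only establishes closure under right multiplication by $U$ (positive powers) and therefore carries along an auxiliary ``largest $c_{2,i}/c_{4,i}$'' constraint; your formulation absorbs this. Your observation that within $S_{p/q}$ the condition $(a,b)\notin S_{p'/q'}$ collapses to the single strict inequality $-p'a+q'b>1$ is also correct.

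However, the finiteness argument has a genuine gap, and it is precisely the hard part. In the $q\geq2$ case you derive $(pq'-p'q)a\leq q$ and combine it with $a\geq(q-1)/(p+q)$ to bound $D:=pq'-p'q$, and then assert that this leaves ``only finitely many candidates $(p',q')$.'' That inference is false: for a fixed $D\geq1$, the primitive pairs $(p',q')$ with $p'/q'<p/q$ and $pq'-p'q=D$ form an infinite family $(p'_0+kp,\,q'_0+kq)$, $k\geq0$ (e.g.\ for $p/q=1/2$ and $D=1$, the fractions $0,\,1/3,\,2/5,\,3/7,\ldots$ all qualify), and one can check directly that for each fixed $D$ infinitely many of the corresponding strips $S_{p'/q'}$ do meet $S_{p/q}\cap\{a\geq a_0\}$. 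So bounding $D$ alone does not bound the number of strips. What saves the day is an additional analysis of the upper envelope of the affine functions $a\mapsto(1+p'a)/q'$ within each fixed-$D$ family (the envelope over a family contributes at most one non-redundant affine piece, because $\partial_k\big((1+p'_ka)/q'_k\big)$ has constant sign), but you do not supply this, and the $q=1$ case---where $a$ is not bounded below and $D$ is unbounded---is only gestured at. The paper's proof does not split on $q$ at all; its crucial input, entirely absent from your argument, is the $\mu_{\Omega_M}$-integrability of $R_M$: in the case where no single $R_s^{(i)}$ covers the lower border $b=\frac{c_1}{c_3}a$ in its interior, the paper shows that if the border were not eventually covered by $\bigcup_s R_s^{(i)}$ then $R_M=\frac{1}{a(b-\frac{c_1}{c_3}a)}$ would fail to be integrable over a fat neighborhood of that line, a contradiction. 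This integrability-driven step is what forces the region away from its lower border and lets a Diophantine bound finish; without some replacement for it your $q=1$ analysis (and, as written, your $q\geq2$ analysis too) is not complete.
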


\begin{proof}
We first examine the region
\[R_C:=\left\{p_{a,b}m_iH\in\Omega_M:(ac_1+bc_3,ac_2+bc_4)\in\Omega,C\in\bigcup_{j=1}^km_iHm_j^{-1}\right\}\]
for a given $C=\left(\begin{smallmatrix}c_1&c_2\\ c_3&c_4\end{smallmatrix}\right)\in\bigcup_{i,j=1}^km_iHm_j^{-1}$. Note that $R_{-c_1/c_3}$ is a subset of the union of all $R_{C'}$ such that $C'\in\bigcup_{i,j=1}^km_iHm_j^{-1}$ is a matrix having $\left(\begin{smallmatrix}c_1\\c_3\end{smallmatrix}\right)$ as its first column. If $c_4=0$, then $C=\left(\begin{smallmatrix}c_1&-1\\1&0\end{smallmatrix}\right)$ and for a given index $i$, $R_C\cap Pm_iH/H$ is a subset of $\{p_{a,b}m_iH:(ac_1+c_3,-a)\in\Omega\}=\emptyset$. Thus $R_C$ is empty in this case. Next, suppose $c_4\leq-1$. Then $R_C\cap Pm_iH/H$ consists of the elements $p_{a,b}m_iH$ which must satisfy \[\begin{array}{ccc}-\frac{c_1}{c_3}a<b\leq\frac{1-c_1a}{c_3} & \mbox{and} & \frac{1-c_2a}{c_4}\leq b<-\frac{c_2}{c_4}a.\end{array}\] However, we have $c_1c_4-c_2c_3=1$, which implies that $\frac{c_1}{c_3}-\frac{c_2}{c_4}=\frac{1}{c_3c_4}<0$, and hence $-\frac{c_2}{c_4}<-\frac{c_1}{c_3}$. Thus the above conditions on $p_{a,b}m_iH$ cannot be satisfied, and therefore $R_C=\emptyset$. So $R_C$ is nonempty only when $c_1\leq0$ and $c_3,c_4\geq1$, which then implies that $c_2=\frac{c_1c_4-1}{c_3}<0$.

As a result of our work above, we reset notation so that $C=\left(\begin{smallmatrix}-c_1&-c_2\\ c_3&c_4\end{smallmatrix}\right)$ and we assume that $c_1\geq0$ and $c_2,c_3,c_4\geq1$. Now a point $(a,b)\in\Omega$ satisfies $(-ac_1+bc_3,-ac_2+bc_4)\in\Omega$ if and only if
\[\frac{c_1}{c_3}a<b\leq\frac{1+c_1a}{c_3},\quad \frac{c_2}{c_4}a<b\leq\frac{1+c_2a}{c_4},\quad\mbox{and}\quad b>\frac{1+(c_1+c_2)a}{c_3+c_4}.\]
Since $\frac{c_1}{c_3}a<\frac{c_2}{c_4}a\leq\frac{1+(c_1+c_2)a}{c_3+c_4}$ for $a\in(0,1]$, the above conditions reduce to $\frac{1+(c_1+c_2)a}{c_3+c_4}<b\leq\min\{\frac{1+c_1a}{c_3},\frac{1+c_2a}{c_4}\}$. We therefore have
\[R_C=\left\{p_{a,b}m_iH\in\Omega_M:\begin{array}{c}\frac{1+(c_1+c_2)a}{c_3+c_4}<b\leq\min\{\frac{1+c_1a}{c_3},\frac{1+c_2a}{c_4}\}\\ C\in\bigcup_{j=1}^km_iHm_j^{-1}\end{array}\right\}.\]

Since $M$ is closed under left multiplication by $\left(\begin{smallmatrix}1&-1\\0&1\end{smallmatrix}\right)$, $\bigcup_{j=1}^km_iHm_j^{-1}$ is closed under right multiplication by $\left(\begin{smallmatrix}1&1\\0&1\end{smallmatrix}\right)$ for each $i\in\{1,\ldots,k\}$. So if $C\in\bigcup_{j=1}^km_iHm_j^{-1}$, then for every $n\in\N$, \[C\left(\begin{array}{cc}1&n\\0&1\end{array}\right)=\left(\begin{array}{cc}-c_1&-nc_1-c_2\\c_3&nc_3+c_4\end{array}\right)\in\bigcup_{j=1}^km_iHm_j^{-1}.\]
We have
\[R_{C\left(\begin{smallmatrix}1&n\\0&1\end{smallmatrix}\right)}=\left\{p_{a,b}m_iH\in\Omega_M:\begin{array}{c}\frac{1+((n+1)c_1+c_2)a}{(n+1)c_3+c_4}<b\leq\frac{1+(nc_1+c_2)a}{nc_3+c_4}\\ C\in\bigcup_{j=1}^km_iHm_j^{-1}\end{array}\right\},\]
noting that $\frac{1+(nc_1+c_2)a}{nc_3+c_4}<\frac{1+c_3a}{c_4}$. Thus for each $i$ such that $C\in\bigcup_{j=1}^km_iHm_j^{-1}$, the regions $\{R_{C\left(\begin{smallmatrix}1&n\\0&1\end{smallmatrix}\right)}\cap Pm_iH/H:n\in\N\}$ paste together to form
\[\left\{p_{a,b}m_iH\in Pm_iH/H:\frac{c_1}{c_3}a<b\leq\min\Big\{\frac{1+c_1a}{c_3},\frac{1+c_2a}{c_4}\Big\}\right\}\]
since the sequence $(\frac{1+(nc_1+c_2)a}{nc_3+c_4})$ decreases to $\frac{c_1}{c_3}a$ for each $a\in(0,1]$. Therefore, if we assume that $\frac{c_{2,i}}{c_{4,i}}$ is the largest fraction such that $\left(\begin{smallmatrix}-c_1&-c_{2,i}\\c_3&c_{4,i}\end{smallmatrix}\right)\in\bigcup_{j=1}^km_iHm_j^{-1}$, then
\[R_{c_1/c_3}\cap Pm_iH/H\subseteq\left\{p_{a,b}m_iH:\frac{c_1}{c_3}a<b\leq\min\Big\{\frac{1+c_1a}{c_3},\frac{1+c_{2,i}a}{c_{4,i}}\Big\}\right\}.\]
We use $R_{c_1/c_3}^{(i)}$ to denote the set on the right for each $c_1\geq0$, $c_3\geq1$, and $i\in\{1,\ldots,k\}$ such that $\left(\begin{smallmatrix}-c_1\\c_3\end{smallmatrix}\right)$ is the first column of a matrix in $\bigcup_{j=1}^km_iHm_j^{-1}$, and $\frac{c_{2,i}}{c_{4,i}}$ ($c_{2,i},c_{4,i}\geq1$) is the largest fraction with $\left(\begin{smallmatrix}-c_1&-c_{2,i}\\c_3&c_{4,i}\end{smallmatrix}\right)\in\bigcup_{j=1}^km_iHm_j^{-1}$. If $c_1$, $c_3$, and $i$ do not satisfy these conditions, we let $R_{c_1/c_3}^{(i)}=\emptyset$. Then for all $c_1$ and $c_3$, we have \[R_{c_1/c_3}=\bigcup_{i=1}^k\left(R_{c_1/c_3}^{(i)}\setminus\bigcup_{s\in\Q,0\leq s<c_1/c_3}R_s^{(i)}\right).\]

Assume that $c_1$, $c_3$, and $i$ are such that $R_{c_1/c_3}^{(i)}\neq\emptyset$. In order to show that $R_{c_1/c_3}\cap Pm_iH/H$ is either empty or a polygon, it is sufficient to prove that $R_{c_1/c_3}\cap Pm_iH/H$ can be written in the form $R_{c_1/c_3}^{(i)}\bsl(\bigcup_{\ell=1}^nR_{s_\ell}^{(i)})$ for some $s_\ell\in\Q$. Note that if $\frac{c_1}{c_3}=0$, then $R_{c_1/c_3}\cap Pm_iH/H=R_0^{(i)}$, which is a triangle if nonempty. So assume that $\frac{c_1}{c_3}>0$. We now consider two cases.

\noindent \textit{Case 1.} There exists $s\in\Q$ with $s<\frac{c_1}{c_3}$ such that $R_s^{(i)}$ contains the lower-right border $\{p_{a,b}m_iH:b=\frac{c_1}{c_3}a\}$ of $R_{c_1/c_3}^{(i)}$ in its interior. Then clearly there exists $N\in\N$ such that \[R_{c_1/c_3}^{(i)}\bsl R_s^{(i)}\subseteq\left\{p_{a,b}m_iH:\frac{1}{N}+\frac{c_1}{c_3}a<b\leq\frac{1+c_1a}{c_3}\right\}.\]
If there is another $\frac{c_1'}{c_3'}\in\Q$ with $\frac{c_1'}{c_3'}<\frac{c_1}{c_3}$ such that $R_{c_1'/c_3'}^{(i)}$ intersects $R_{c_1/c_3}^{(i)}\bsl R_s^{(i)}$, then there exists $(a,b)\in\Omega$ such that $\frac{1}{N}+\frac{c_1}{c_3}a<b\leq\frac{1+c_1'a}{c_3'}$. This implies that $(\frac{c_1}{c_3}-\frac{c_1'}{c_3'})a<\frac{1}{c_3'}-\frac{1}{N}$, which can only hold if $c_3'<N$. Thus there are finitely many $s'\in\Q$ such that $R_{s'}^{(i)}$ intersects $R_{c_1/c_3}^{(i)}\bsl R_s^{(i)}$. So \[R_{c_1/c_3}\cap Pm_iH/H=R_{c_1/c_3}^{(i)}\bsl\left(R_s^{(i)}\cup\bigcup_{\ell=1}^nR_{s_\ell}^{(i)}\right)\] for some $s_\ell\in\Q$, completing the proof that $R_{c_1/c_3}\cap Pm_iH/H$ is a polygon in this case.

\noindent \textit{Case 2.} There exists no $s\in\Q$ with $s<\frac{c_1}{c_3}$ such that $R_s^{(i)}$ contains the lower-right border of $R_{c_1/c_3}^{(i)}$ in its interior. Let $n_1\geq0$ and $n_2\geq1$ be integers such that $\{p_{a,b}m_iH:b=\frac{1+n_1a}{n_2}\}$ is an upper border for $R_{s}^{(i)}$ for some $s\in\Q$ with $s<\frac{c_1}{c_3}$. If $\frac{n_1}{n_2}=\frac{c_1}{c_3}$, then the line $\{(a,b)\in\R^2:b=\frac{1+n_1a}{n_2}\}$ is above and parallel to $\{(a,b)\in\R^2:b=\frac{c_1}{c_3}a\}$, in which case the lower-right border of $R_{c_1/c_3}^{(i)}$ is contained in the interior of $R_s^{(i)}$, a contradiction. So $\frac{n_1}{n_2}\neq\frac{c_1}{c_3}$, and $\{(a,b)\in\R^2:b=\frac{1+n_1a}{n_2}\}$ intersects $\{(a,b)\in\R^2:b=\frac{c_1}{c_3}a\}$ at the point $(\frac{c_3}{n_2c_1-n_1c_3},\frac{c_1}{n_2c_1-n_1c_3})$. If $\frac{n_1}{n_2}>\frac{c_1}{c_3}$, then since $\frac{c_3}{n_2c_1-n_1c_3}<0$ and the slope of $b=\frac{1+n_1a}{n_2}$ is greater than that of $b=\frac{c_1}{c_3}a$, the lower-right border of $R_{c_1/c_3}^{(i)}$ is again contained in the interior of $R_s^{(i)}$, another contradiction.

\begin{figure}
\centering
\begin{minipage}{0.5\textwidth}
  \centering
\begin{overpic}[width=0.98\linewidth]{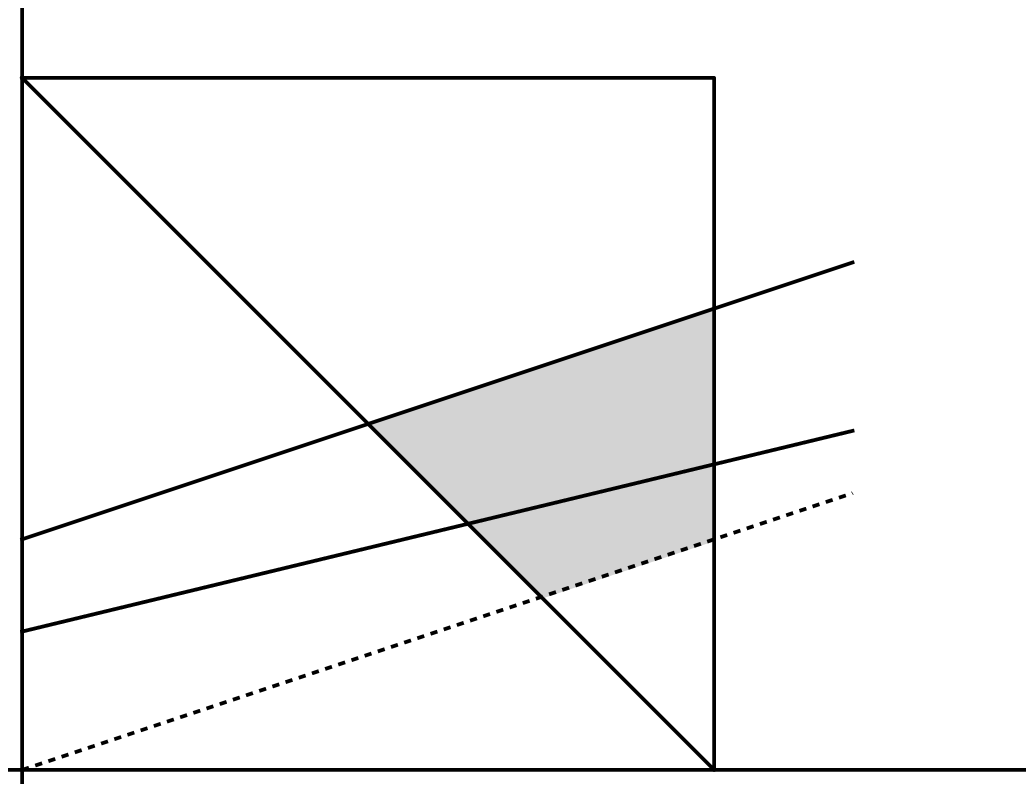}
\put (83,54){{\footnotesize $b=\frac{1+c_1a}{c_3}$}}
\put (83,39){{\footnotesize $b=\frac{1+n_1a}{n_2}$}}
\put (83,30){{\footnotesize $b=\frac{c_1}{c_3}a$}}
\end{overpic}
  \caption{$R_{c_1/c_3}$ in Case 1}

\end{minipage}%
\begin{minipage}{0.5\textwidth}
  \centering
\begin{overpic}[width=0.98\linewidth]{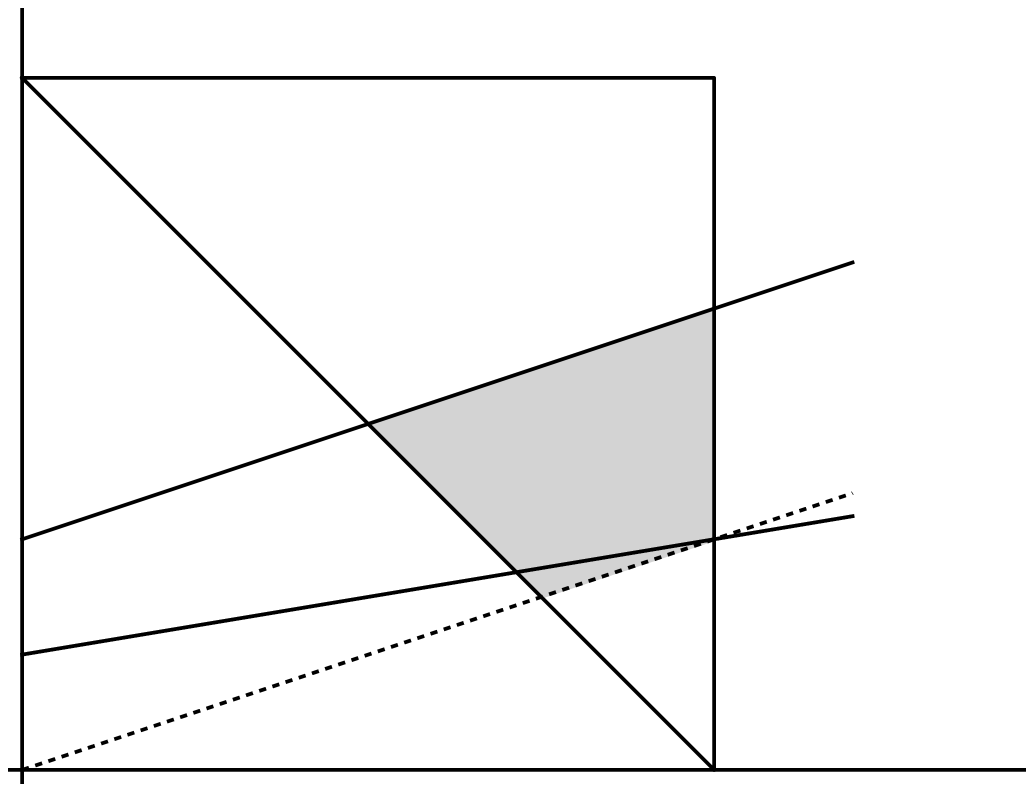}
\put (83,54){{\footnotesize $b=\frac{1+c_1a}{c_3}$}}
\put (83,26){{\footnotesize $b=\frac{1+n_1a}{n_2}$}}
\put (83,34){{\footnotesize $b=\frac{c_1}{c_3}a$}}
\end{overpic}
  \caption{$R_{c_1/c_3}$ in Case 2}
\end{minipage}
\end{figure}

So $\frac{n_1}{n_2}<\frac{c_1}{c_3}$, and furthermore, the intersection point $(\frac{c_3}{n_2c_1-n_1c_3},\frac{c_1}{n_2c_1-n_1c_3})$ cannot be above or to the right of $\Omega$. If the intersection point is in the interior of $\Omega$ or is on or below its border $b=1-a$, then $R_{c_1/c_3}^{(i)}\bsl R_s^{(i)}$ contains a set of the form \[\{p_{a,b}m_iH:b-\frac{c_1}{c_3}a\in(0,\epsilon),a\in(t_1,t_2)\}\] where $\epsilon,t_1,t_2\in(0,1)$ and $t_1<t_2$. Since $\frac{1}{a(b-\frac{c_1}{c_3}a)}$ is not $\mu_{\Omega_M}$-integrable over the above set for any $\epsilon,t_1,t_2\in(0,1)$ with $t_1<t_2$ and $R_M$ is $\mu_{\Omega_M}$-integrable, the lower-right border of $R_{c_1/c_3}^{(i)}$ is contained in $\bigcup_{s\in\Q,0\leq s<\frac{c_1}{c_3}}R_s^{(i)}$. Since the set $\{(\frac{c_3}{n},\frac{c_1}{n}):n\in\N\}\cap\Omega$ of possible intersection points in $\Omega$ of $b=\frac{c_1}{c_3}a$ with a line $b=\frac{1+n_1a}{n_2}$ corresponding to the upper-left border of a set $R_s^{(i)}$ is finite, there must be some $s\in\Q$ with $s<\frac{c_1}{c_3}$ such that $R_s^{(i)}$ contains the lower-right border of $R_{c_1/c_3}^{(i)}$, and in this case, the line determining the upper-left border of $R_s^{(i)}$, say $b=\frac{1+n_1a}{n_2}$, intersects $b=\frac{c_1}{c_3}a$ at the border of $\Omega$ at the line $a=1$ or $b=1$.

Now let $s'\in\Q$ with $s'<\frac{c_1}{c_3}$ such that the upper-left border of $R_{s'}^{(i)}$ is determined by $b=\frac{1+n_1'a}{n_2'}$. Since $R_{s'}^{(i)}$ does not contain the lower-right border of $R_{c_1/c_3}^{(i)}$, $\frac{n_1'}{n_2'}<\frac{c_1}{c_3}$ and $b=\frac{1+n_1'a}{n_2'}$ intersects $b=\frac{c_1}{c_3}a$ at or to the left of the intersection point of $b=\frac{1+n_1a}{n_2}$ with $b=\frac{c_1}{c_3}a$. If $\frac{n_1'}{n_2'}>\frac{n_1}{n_2}$, then it is clear that the line $b=\frac{1+n_1'a}{n_2'}$ passes under the set $\{(a,b)\in\Omega:b>\max\{\frac{1+n_1a}{n_2},\frac{c_1}{c_3}a\}\}$, and thus $R_{s'}^{(i)}$ does not intersect $R_{c_1/c_3}^{(i)}\bsl R_s^{(i)}$. If $\frac{n_1'}{n_2'}<\frac{n_1}{n_2}$ and $R_{s'}^{(i)}$ does intersect $R_{c_1/c_3}^{(i)}\bsl R_s^{(i)}$, then there exists $(a,b)\in\Omega$ such that $\frac{1+n_1a}{n_2}<b\leq\frac{1+n_1'a}{n_2'}$, implying that $(\frac{n_1}{n_2}-\frac{n_1'}{n_2'})a<\frac{1}{n_2'}-\frac{1}{n_2}$. This inequality holds only if $n_2'<n_2$. This shows that there are finitely many $s'\in\Q$ such that $R_{s'}^{(i)}$ intersects $R_{c_1/c_3}^{(i)}\bsl R_s^{(i)}$, and thus completes the proof that $R_{c_1/c_3}\cap Pm_iH/H$ is a polygon.

\end{proof}

So by our work above, we have proven that $R_M$ is a piecewise rational function on $\Omega_M$, and for a given $\frac{c_1}{c_3}\in\Q$, the region $R_{c_1/c_3}$ over which $R_M(p_{a,b}m_iH)=\frac{1}{a(b-\frac{c_1}{c_3}a)}$ is either empty or a union of polygons, one polygon being in each component $Pm_iH/H$ such that $R_{c_1/c_3}\cap Pm_iH/H\neq\emptyset$. Note also that if $C=\left(\begin{smallmatrix}-c_1&-c_2\\c_3&c_4\end{smallmatrix}\right)\in m_iHm_j^{-1}$, then the restriction of the return map $r_M$ to the polygon $R_{c_1/c_3}\cap R_C\cap Pm_iH/H$ is given by
\[r_M(p_{a,b}m_iH)=p_{-c_1a+c_3b,-c_2a+c_4b}m_jH.\]
Therefore, each component $Pm_iH/H$ of $\Omega_M$ can be divided into a countable number of polygons $P'$ such that $r_M$ maps each $P'$ linearly from $Pm_iH/H$ to $Pm_jH/H$, for some $j$ depending on $P'$.

\subsection{The boundary of $R_M^{-1}[0,c]$ has measure $0$}\label{sec:5.2}

Next, we prove that for a given $c>0$, the boundary of $R_M^{-1}[0,c]$ has $\mu_{\Omega_M}$-measure $0$. First notice that $R_M$ is continuous $\mu_{\Omega_M}$-a.e. Because the set $\ol{R_M^{-1}[0,c]}\bsl R_M^{-1}[0,c]$ contains only points of discontinuity of $R_M$, it has $\mu_{\Omega_M}$-measure $0$. Now for a given $s\in\Q$, define $f_s:\Omega_M\rar[0,\infty]$ so that
\[f_s(p_{a,b}m_iH)=\begin{cases}\frac{1}{a(b-sa)} & \mbox{for all $p_{a,b}m_iH\in R_s$}\\
\infty & \mbox{otherwise.}\end{cases}\]
We then have
\begin{align*}
R_M^{-1}[0,c]\bsl (R_M^{-1}[0,c])^o&=\bigcup_{s\in\Q}f_s^{-1}[0,c]\bsl\left(\bigcup_{s\in\Q}f_s^{-1}[0,c]\right)^o\\
&\subseteq\bigcup_{s\in\Q}f_s^{-1}[0,c]\bsl\left(\bigcup_{s\in\Q}(f_s^{-1}[0,c])^o\right)\\
&\subseteq\bigcup_{s\in\Q}((f_s^{-1}[0,c])\bsl(f_s^{-1}[0,c])^o).
\end{align*}
Each set $f_s^{-1}[0,c]$ is either empty or a finite union of sets of the form
\[\left\{p_{a,b}m_iH\in Pm_iH:\frac{c_1}{c_3}a<b\leq\min\Big\{\frac{1+c_1a}{c_3},\frac{1+c_{2,i}a}{c_{4,i}}\Big\},b\geq\frac{c_1}{c_3}a+\frac{1}{ca}\right\}.\]
So $(f_s^{-1}[0,c])\bsl(f_s^{-1}[0,c])^o$ is essentially a set of finitely many line and curve segments for each $s\in\Q$. Hence it is clear that $R_M^{-1}[0,c]\bsl(R_M^{-1}[0,c])^o$ is of $\mu_{\Omega_M}$-measure $0$. As a consequence,
\[\partial(R_M^{-1}[0,c])=\ol{R_M^{-1}[0,c]}\bsl(R_M^{-1}[0,c])^o=(\ol{R_M^{-1}[0,c]}\bsl R_M^{-1}[0,c])\cup (R_M^{-1}[0,c]\bsl(R_M^{-1}[0,c])^o)\]
has $\mu_{\Omega_M}$-measure $0$. This, along with Section \ref{sec:6}, proves the existence of the limiting gap measure $\nu_{I,M}$.

\subsection{$F_M'$ is continuous and piecewise real-analytic}\label{sec:5.3}

Next, we want to prove that the function $F_M$ has a continuous, piecewise real-analytic derivative. Note first that since $R_M^{-1}[0,c]$ is the disjoint union of the sets $f_s^{-1}[0,c]$, we have
\[F_M(c)=\sum_{s\in\Q}\mu_{\Omega_M}(f_s^{-1}[0,c]).\]
So it suffices to show that $c\mapsto\mu_{\Omega_M}(f_s^{-1}[0,c])$ has a continuous, piecewise real-analytic derivative for every $s\in\Q$, and that for a given $c>0$, there are at most finitely many $s\in\Q$ for which $f_s^{-1}[0,c]$ is nonempty.

The first claim is easy to see. Indeed, by triangulating the polygons that make up the region $R_s$, we can write $c\mapsto\mu_{\Omega_M}(f_s^{-1}[0,c])$ as a finite sum of functions $g_T:[0,\infty)\rar\R$ defined by
\[g_T(c)=\frac{1}{\#M}m\left(\Big\{(a,b)\in T:b\geq sa+\frac{1}{ca}\Big\}\right),\]
where $T\subseteq\Omega$ is a triangle and as in Section \ref{sec:2}, $dm=2\,da\,db$. It is then straightforward to show that each function $g_T$ has a continuous, piecewise real-analytic derivative, implying that $c\mapsto\mu_{\Omega_M}(f_s^{-1}[0,c])$ has the same property.

To prove the latter assertion, let $c>0$ be given and suppose that there exists $\frac{c_1}{c_3}\in\Q$ such that $f_{c_1/c_3}^{-1}[0,c]$ is nonempty (assume $c_1\geq0$ and $c_3\geq1$). Then there is some index $i$ such that $f_{c_1/c_3}^{-1}[0,c]\cap Pm_iH/H\neq\emptyset$. We have
\[f_{c_1/c_3}^{-1}[0,c]\cap Pm_iH/H\subseteq\left\{p_{a,b}m_iH:\frac{c_1}{c_3}a<b\leq\frac{1+c_1a}{c_3},b\geq\frac{c_1}{c_3}a+\frac{1}{ca}\right\},\]
and hence the latter set is nonempty. So there exists $p_{a,b}m_iH\in Pm_iH/H$ such that $\frac{c_1}{c_3}a+\frac{1}{ca}\leq b\leq\frac{1+c_1a}{c_3}$, which implies that $c\geq\frac{c_3}{a}$. We have
\[\sup\left\{a\in\R:(a,b)\in\Omega,\frac{c_1}{c_3}a<b\leq\frac{1+c_1a}{c_3}\right\}=\begin{cases}1&\mbox{if $\frac{c_1}{c_3}\leq1$}\\\frac{c_3}{c_1}&\mbox{if $\frac{c_1}{c_3}>1$}\end{cases},\]
and therefore $c\geq c_3$ if $\frac{c_1}{c_3}\leq1$ and $c\geq c_1$ if $\frac{c_1}{c_3}>1$; i.e., $c\geq\max\{c_1,c_3\}$. There are clearly finitely many positive fractions $\frac{c_1}{c_3}$ satisfying this condition, and thus satisfying $f_{c_1/c_3}^{-1}([0,c])\neq\emptyset$. This completes the proof that $F_M'$ is continuous and piecewise real-analytic.

One more property of $R_M$ we wish to mention is that for any $\epsilon>0$, there exist bounded continuous functions $g_{1,\epsilon},g_{2,\epsilon},g_{3,\epsilon}:\Omega_M\rar[0,\infty)$ such that $g_{1,\epsilon}\leq R_M$, $g_{2,\epsilon}\leq\frac{1}{R_M}\leq g_{3,\epsilon}$, and
\[\int_{\Omega_M}(R_M-g_{1,\epsilon})\,d\mu_{\Omega''},\int_{\Omega_M}\left(\frac{1}{R_M}-g_{2,\epsilon}\right)\,d\mu_{\Omega''},\int_{\Omega_M}\left(g_{3,\epsilon}-\frac{1}{R_M}\right)\,d\mu_{\Omega''}<\epsilon.\]
This follows easily from the properties of $R_M$ proven in Section \ref{sec:5.1}, and the fact that $\frac{1}{R_M}\leq\frac{1}{r}\leq1$.

\subsection{The $h$-spacings and numerators of differences in $(\FF_{I,M}(Q))$}\label{sec:5.4}

For $h\in\N$ and $A=\{x_0\leq x_1\leq\cdots\leq x_N\}\subseteq[0,1]$, let $\mathbf{v}_{A,i,h}=(x_{i+j}-x_{i+j-1})_{j=1}^h\in\R^h$ for $i\in\{1,\ldots,N-h\}$. We define the \textit{$h$-spacing distribution measure} of $A$ to be the measure $\nu_{A,h}$ on $[0,\infty)^h$ such that
\[\nu_{A,h}\left(\prod_{j=1}^h[0,c_j]\right)=\frac{1}{N}\#\left\{x_i\in A:N\mathbf{v}_{A,i,h}\in\prod_{j=1}^h[0,c_j(x_N-x_0)]\right\}.\]
For an increasing sequence $(A_n)$ of subsets of $[0,1]$, we call the weak limit of $(\nu_{A_n,h})$, if it exists, the \textit{limiting $h$-spacing measure} of $(A_n)$.

Upon $(R_M\circ r_M^{j-1})(W_{H,Q}(\beta_i))=Q^2(\beta_{i+j}-\beta_{i+j-1})$, $i\in\{0,\ldots,N_{I,M}(Q)-h\}$, we have
\[\frac{\#\{\beta_i\in\FF_{I,M}(Q):Q^2\mathbf{v}_{\FF_{I,M}(Q),i,h}\in\prod_{j=1}^h[0,c_j]\}}{N_{I,M}(Q)}=\rho_{Q,I,M}\left(\bigcap_{j=1}^h(R_M\circ r_M^{j-1})^{-1}[0,c_j]\right).\]
For a given $j\in\N$, the function $R_M\circ r_M^j$, like $R_M$ itself, is piecewise rational and the domain on which $R_M\circ r_M^j$ is defined by a given rational function is a union of polygons. Indeed, we have seen in Section \ref{sec:5.1} that $\Omega_M$ can be divided into a countable number of polygons on each of which $r_M$ is linear, which implies that the same property holds for $r_M^j$ for any $j\in\N$. (This is in fact true for all $j\in\Z$ since $r_M$ is invertible.) This, together with the piecewise rationality of $R_M$ on polygons implies the same for $R_M\circ r_M^j$. As a consequence, the sets $(R_M\circ r_M^j)^{-1}[0,c_j]$ have boundaries of measure $0$. Hence, by our work in Section \ref{sec:6}, the limiting $h$-spacing measure $\nu_{I,M,h}$ of $(\FF_{I,M}(Q))$ exists and satisfies
\[\nu_{I,M,h}\left(\prod_{j=1}^h[0,c_j]\right)=\mu_{\Omega_M}\left(\bigcap_{j=1}^h(R_M\circ r_M^j)^{-1}\left(\left[0,\frac{\pi^2[\Gamma:H]}{3(\#M)}c_j\right]\right)\right).\]

Lastly, note that if $\frac{a}{q}<\frac{b}{p}$ are consecutive elements in $\FF_{I,M}(Q)$, $\frac{a'}{q'}$ succeeds $\frac{a}{q}$ in $\FF(Q)$, and $W_{H,Q}(\frac{b}{p})\in R_{c_1/c_3}$, then $p=-c_1q+c_3q'$ and
\[bq-ap=qp\left(\frac{b}{p}-\frac{a}{q}\right)=\frac{q(-c_1q+c_3q')}{Q^2}R_M\left(W_{H,Q}(\frac{a}{q})\right)=\frac{q(-c_1q+c_3q')}{Q^2}\frac{Q^2}{q(q'-\frac{c_1}{c_3}q)}=c_3.\]
Using this fact and Section \ref{sec:6}, one can show that for every $c_3\in\N$,
\[\lim_{Q\rar\infty}\frac{\#\{\frac{a}{q}<\frac{b}{q}\mbox{ consecutive in }\FF_{I,M}(Q):bq-ap=c_3\}}{N_{I,M}(Q)}=\mu_{\Omega_M}\left(\bigcup_{\substack{c_1/c_3\in\Q\\(c_1,c_3)=1}}R_{c_1/c_3}\right),\]
recovering a result Badziahin and Haynes proved for the sequence $(\FF_{Q,d})$ in \cite{BH}.

\section{The convergence $\rho_{Q,I,M}\rar\mu_{\Omega_M}$}\label{sec:6}

In this section, we prove the weak convergence $\rho_{Q,I,M}\rar\mu_{\Omega_M}$, and hence complete the proof of Theorem \ref{T1}. We first consider the measures $(\rho_{Q,I,M}^R)$ on $G/H$ defined by
\[d\rho_{Q,I,M}^R=\frac{N_{I,M}(Q)}{Q^2}\,d\rho_{Q,I,M}\,ds.\]
In other words, $\rho_{Q,I,M}^R$ is a measure concentrated on segments of the horocycle flow connecting $W_{H,Q}(\beta_i)$ to $W_{H,Q}(\beta_{i+1})$ for $0\leq i\leq N_{I,M}(Q)-1$. These segments connect to give one segment from $W_{H,Q}(\beta_0)$ to $W_{H,Q}(\beta_{N_{I,M}(Q)})$. So for a bounded, measurable function $f:G/H\rar\R$,
\begin{align}
\int f\,d\rho_{Q,I,M}^R=\frac{1}{Q^2}\int_{Q^2\beta_{0}}^{Q^2\beta_{N_{I,M}(Q)}}f\left(\left(\begin{array}{cc}1&0\\-s&1\end{array}\right)W_{H,Q}(0)\right)ds,\label{e61}
\end{align}
noting that $h_{Q^2\beta_i}W_{H,Q}(0)=W_{H,Q}(\beta_i)$. We wish to show that the sequence $(\rho_{Q,I,M}^R)$ converges weakly to $\frac{|I|\mu_{G/H}}{\mu_{G/H}(G/H)}$. Notice that
\[\begin{array}{c}
\left(\begin{array}{cc}1&0\\-s&1\end{array}\right)W_{H,Q}(0)=\left(\begin{array}{cc}0&1\\-1&0\end{array}\right)\left(\begin{array}{cc}Q&0\\0&Q^{-1}\end{array}\right)\left(\begin{array}{cc}1&\frac{s}{Q^2}\\0&1\end{array}\right)H.
\end{array}\]
So \eqref{e61} can be written as
\begin{align*}
\int f\,d\rho_{Q,I,M}^R=\int_{\beta_0}^{\beta_{N_{I,M}(Q)}}(\tilde{f}\circ g_Q)\left(\left(\begin{array}{cc}1&t\\0&1\end{array}\right)H\right)dt,
\end{align*}
where $\tilde{f}:G/H\rar\R$ is the composition of left multiplication on $G/H$ by $\left(\begin{smallmatrix}0&1\\-1&0\end{smallmatrix}\right)$ followed by $f$, and $g_Q:G/H\rar G/H$ is left multiplication by $\left(\begin{smallmatrix}Q&0\\0&Q^{-1}\end{smallmatrix}\right)$. Since $\bigcup_{Q\in\N}\FF_M(Q)$ is dense in $[0,1]$, $\beta_0\rar x_1$ and $\beta_{N_{I,M}(Q)}\rar x_2$ as $Q\rar\infty$. So if we define the measure $\rho_{Q,I,M}^{R'}$ on $G/H$ such that
\[\int f\,d\rho_{Q,I,M}^{R'}=\int_{x_1}^{x_2}(\tilde{f}\circ g_Q)\left(\left(\begin{array}{cc}1&t\\0&1\end{array}\right)H\right)dt,\]
for all bounded, measurable functions $f:G/H\rar\R$, then it is clear that $\rho_{Q,I,M}^R-\rho_{Q,I,M}^{R'}\rar0$ weakly. Thus to show that $\rho_{Q,I,M}^R\rar\frac{|I|\mu_{G/H}}{\mu_{G/H}(G/H)}$ weakly, it suffices to prove that $\rho_{Q,I,M}^{R'}\rar\frac{|I|\mu_{G/H}}{\mu_{G/H}(G/H)}$ weakly.

Like the convergence $\rho_{Q,I}^R\rar\mu_{G/\Gamma}$, this is a consequence of the equidistribution of closed horocycles in $G/H$. In particular, the argument for \cite[Theorem 7]{EM2} can be used to prove
\begin{equation}
\lim_{Q\rar\infty}\int_{x_1}^{x_2}(f\circ g_Q)\left(\left(\begin{array}{cc}1&t\\0&1\end{array}\right)H\right)dt=\frac{x_2-x_1}{\mu_{G/H}(G/H)}\int_{G/H}f\,d\mu_{G/H}\label{e62}
\end{equation}
for all functions $f:G/H\rar\R$ that are bounded and uniformly continuous.

Here we give an outline of this argument. Let $f:G/H\rar\R$ be bounded and uniformly continuous. Thicken the horocycle $C_{[x_1,x_2]}=\left\{\left(\begin{smallmatrix}1&t\\0&1\end{smallmatrix}\right)H:t\in[x_1,x_2]\right\}$ to an open set $U$ of points near $C_{[x_1,x_2]}$. By the uniform continuity of $f$, the integral of $f\circ g_Q$ over $C_{[x_1,x_2]}$ is, uniformly in $Q\in\N$, close to the integral of $f\circ g_Q$ over $U$, multiplied by $\frac{x_2-x_1}{\mu_{G/H}(U)}$ to account for the measure of the thickness of $U$. Then by the mixing of the geodesic flow $\{g_s:s>0\}$ in $G/H$ \cite{HM}, we have
\[\lim_{Q\rar\infty}\frac{x_2-x_1}{\mu_{G/H}(U)}\int_Uf\circ g_Q\,d\mu_{G/H}=\frac{x_2-x_1}{\mu_{G/H}(G/H)}\int_{G/H}f\,d\mu_{G/H}.\]
Letting $U$ shrink to $C_{[x_1,x_2]}$ then reveals \eqref{e62}.

Next, noting that $\mu_{G/H}$ is left $G$-invariant, we have
\begin{align*}
\lim_{Q\rar\infty}\int f\,d\rho_{Q,I,M}^{R'}&=\frac{|I|}{\mu_{G/H}(G/H)}\int_{G/H}\tilde{f}\,d\mu_{G/H}=\frac{|I|}{\mu_{G/H}(G/H)}\int_{G/H}f\,d\mu_{G/H},
\end{align*}
for every bounded, uniformly continuous function $f:G/H\rar\R$. By the Portmanteau theorem, this is equivalent to saying that $\rho_{Q,I,M}^{R'}\rar\frac{|I|\mu_{G/H}}{\mu_{G/H}(G/H)}$ weakly, which then implies that $\rho_{Q,I,M}^R\rar\frac{|I|\mu_{G/H}}{\mu_{G/H}(G/H)}$ weakly.

Our next step is to prove that if $\pi_M:G/H\rar\Omega_M$ is the projection $(x,s)\mapsto x$, where we are viewing $G/H$ as $\{(x,s)\in\Omega_M\times\R:0\leq s<R_M(x)\}$, then $\pi_{M*}\rho_{Q,I,M}^R\rar\frac{|I|\pi_{M*}\mu_{G/H}}{\mu_{G/H}(G/H)}$ weakly. So let $f\in C(\Omega_M)$ be nonnegative and bounded. For a given $\epsilon>0$, let $g_\epsilon:\Omega_M\rar\R$ be a continuous function such that $g_\epsilon\leq R_M$ and $\int_{\Omega_M}(R_M-g_\epsilon)\,d\mu_{\Omega''}<\frac{\epsilon}{2}$. Then \[O_\epsilon=\{h_sp_{a,b}m_iH:(a,b)\in\Omega^o,m_iH\in M,0<s<g_\epsilon(p_{a,b}m_iH)\}\] is an open subset of $G/H$ in which $\mu_{G/H}((G/H)\bsl O_\epsilon)<\frac{\epsilon}{2}$. So by the inner regularity of $\mu_{G/H}$ and Urysohn's lemma, there is a continuous function $\chi_\epsilon:G/H\rar[0,1]$ such that $\Supp\chi_\epsilon\subseteq O_\epsilon$ and $\chi_\epsilon^{-1}(\{1\})$ is a compact subset of $O_\epsilon$ with $\mu_{G/H}((G/H)\bsl\chi_\epsilon^{-1}(\{1\}))<\epsilon$.

Now notice that $\pi_M$ is continuous on $O_\epsilon$, and therefore $f_{\epsilon,1}=\chi_\epsilon\cdot(f\circ\pi_M),f_{\epsilon,2}=N-\chi_\epsilon\cdot(N-f\circ\pi_M)\in C(G/H)$, where $N>0$ is a constant such that $f\leq N$. Thus
\[\lim_{Q\rar\infty}\int_{G/H}f_{\epsilon,j}\,d\rho_{Q,I,M}^R=\frac{|I|}{\mu_{G/H}(G/H)}\int_{G/H}f_{\epsilon,j}\,d\mu_{G/H},\quad j=1,2.\]
Since $f_{\epsilon,1}\leq f\circ\pi_M\leq f_{\epsilon,2}$, we also have
\begin{align*}
\liminf_{Q\rar\infty}\int_{G/H}f\circ\pi_M\,d\rho_{Q,I,M}^R&\geq\frac{|I|}{\mu_{G/H}(G/H)}\int_{G/H}f_{\epsilon,1}\,d\mu_{G/H}\mbox{ and}\\
\limsup_{Q\rar\infty}\int_{G/H}f\circ\pi_M\,d\rho_{Q,I,M}^R&\leq\frac{|I|}{\mu_{G/H}(G/H)}\int_{G/H}f_{\epsilon,2}\,d\mu_{G/H}.
\end{align*}
By the properties of $\chi_\epsilon$,
\begin{align*}
\int_{G/H}f\circ\pi_M\,d\mu_{G/H}&\leq\int_{G/H}f_{\epsilon,1}\,d\mu_{G/H}+N\epsilon\mbox{ and}\\ \int_{G/H}f\circ\pi_M\,d\mu_{G/H}&\geq\int_{G/H}f_{\epsilon,2}\,d\mu_{G/H}-N\epsilon,
\end{align*}
and therefore the following two inequalities hold:
\begin{align*}
\liminf_{Q\rar\infty}\int_{G/H}f\circ\pi_M\,d\rho_{Q,I,M}^R&\geq\frac{|I|}{\mu_{G/H}(G/H)}\left(\int_{G/H}f\circ\pi_M\,d\mu_{G/H}-N\epsilon\right),\\
\limsup_{Q\rar\infty}\int_{G/H}f\circ\pi_M\,d\rho_{Q,I,M}^R&\leq\frac{|I|}{\mu_{G/H}(G/H)}\left(\int_{G/H}f\circ\pi_M\,d\mu_{G/H}+N\epsilon\right).
\end{align*}
Letting $\epsilon\rar0$ then yields \[\lim_{Q\rar\infty}\int_{G/H}f\circ\pi_M\,d\rho_{Q,I,M}^R=\frac{|I|}{\mu_{G/H}(G/H)}\int_{G/H}f\circ\pi_M\,d\mu_{G/H},\]
proving that $\pi_{M*}\rho_{Q,I,M}^R\rar\frac{|I|\pi_{M*}\mu_{G/H}}{\mu_{G/H}(G/H)}$ weakly.

As noted in Section \ref{sec:5}, $\frac{1}{R_M}$ can be well approximated in $L^1(\Omega_M,\mu_{\Omega_M})$ from above and below by continuous functions $g_{2,\epsilon}$ and $g_{3,\epsilon}$, and so one can easily show that $\frac{1}{R_M}\pi_{M*}\rho_{Q,I,M}^R\rar\frac{|I|\pi_{M*}\mu_{G/H}}{R_M\mu_{G/H}(G/H)}$ weakly using the fact that
\[g_{2,\epsilon}\pi_{M*}\rho_{Q,I,M}^R\rar\frac{g_{2,\epsilon}|I|\pi_{M*}\mu_{G/H}}{\mu_{G/H}(G/H)}\quad\mbox{and}\quad g_{3,\epsilon}\pi_{M*}\rho_{Q,I,M}^R\rar\frac{g_{3,\epsilon}|I|\pi_{M*}\mu_{G/H}}{\mu_{G/H}(G/H)}\] weakly. Notice that \[\frac{1}{R_M}\pi_{M*}\rho_{Q,I,M}^R=\frac{N_{I,M}(Q)}{Q^2}\rho_{Q,I,M}\quad\mbox{and}\quad \frac{|I|\pi_{M*}\mu_{G/H}}{R_M\mu_{G/H}(G/H)}=\frac{|I|(\#M)\mu_{\Omega_M}}{\mu_{G/H}(G/H)},\]
and hence $\frac{N_{I,M}(Q)}{Q^2}\rho_{Q,I,M}\rar\frac{|I|(\#M)\mu_{\Omega_M}}{\mu_{G/H}(G/H)}$ weakly. Since $\rho_{Q,I,M}$ is a probability measure for all $Q\in\N$, we have
\[\lim_{Q\rar\infty}\frac{N_{I,M}(Q)}{Q^2}=\lim_{Q\rar\infty}\frac{N_{I,M}(Q)}{Q^2}\rho_{Q,I,M}(\Omega_M)=\frac{|I|(\#M)\mu_{\Omega_M}(\Omega_M)}{\mu_{G/H}(G/H)}=\frac{|I|(\#M)}{\mu_{G/H}(G/H)},\]
implying that $N_{I,M}(Q)\sim\frac{|I|(\#M)Q^2}{\mu_{G/H}(G/H)}=\frac{3|I|(\#M)Q^2}{\pi^2[\Gamma:H]}$. This proves the equidistribution of $(\FF_M(Q))$ in $[0,1]$, and the weak convergence
\[\rho_{Q,I,M}\rar\mu_{\Omega_M},\]
completing the proof of Theorem \ref{T1}.

\section{The repulsion gap for Farey fractions $\frac{a}{q}$ such that $q\equiv1\bmod{m}$}\label{sec:7}

In this section, we determine the repulsion gap for Farey fractions with denominators congruent to $1$ modulo $m$. For a given increasing sequence $\mathcal{A}:=(A_n)$ of subsets of $[0,1]$ with limiting gap measure $\nu_\mathcal{A}$, we define the \textit{repulsion gap} of $\mathcal{A}$ to be \[K_\mathcal{A}:=\sup\{c\geq0:\nu_A([0,c])=0\}.\]
This means that if $\Delta_{av}(A_n)$ is the average gap between consecutive elements in $A_n$, then for a given $\epsilon\in(0,K_\mathcal{A})$, \[\lim_{n\rar\infty}\frac{\#\{x,x'\mbox{ consecutive in }A_n:x'-x\leq\epsilon\Delta_{av}(A_n)\}}{\#A_n-1}=0.\]
In other words, the proportion of the number of gaps of elements in $A_n$ that are smaller than $\epsilon\Delta_{av}(A_n)$ approaches $0$ as $n\rar\infty$. So $K_{\mathcal{A}}$ provides a measure for how big a large proportion of the gaps in $A_n$ must be for large $n$.

Let $I\subseteq[0,1]$ be a subinterval, $m\in\N$, and \[A=\{(a,1)\bmod{m}:a\in\{0,\ldots,m-1\}\}\subseteq(\Z/m\Z)^2\] so that $\FF_{I,m,A}(Q)$ is the set of fractions $\frac{a}{q}\in\FF(Q)\cap I$ with $q\equiv1\bmod{m}$. We now compute the repulsion gap for the sequence $(\FF_{I,m,A}(Q))$. First note that $\FF_{I,m,A}(Q)=\FF_{I,M}(Q)$, where $M$ is the set of cosets of the form
$\left(\begin{smallmatrix}a&b\\-1&d\end{smallmatrix}\right)\Gamma(m)$ in $\Gamma/\Gamma(m)$, where $a$, $b$, and $d$ are any integers such that $ad+b=1$. It is well known that $[\Gamma:\Gamma(m)]=m^3\prod_{p|m}(1-\frac{1}{p^2})$. Also, since the congruence $ad+b\equiv1\bmod{m}$ has $m^2$ solutions and each coset of $\Gamma/\Gamma(m)$ is completely determined by the congruence classes modulo $m$ of the entries of one of its elements, there are $m^2$ cosets in $M$. So by \eqref{e44}, the repulsion gap of $(\FF_{I,m,A}(Q))$ is \[\frac{3c'}{\pi^2m\prod_{p|m}(1-\frac{1}{p^2})}\]
where $c'=\sup\{c\geq0:\mu_{\Omega_M}(R_M^{-1}[0,c])=0\}$. We have previously found that for a given nonnegative

\noindent fraction $\frac{c_1}{c_3}$, $f_{c_1/c_3}^{-1}[0,c]$ is nonempty only if $c\geq\max\{c_1,c_3\}$. Also, $f_0^{-1}[0,c]$ is a subset of
\[\{p_{a,b}m_iH:b\geq\frac{1}{ca},i\in\{1,\ldots,k\}\}\]
which clearly has positive $\mu_{\Omega_M}$-measure if and only if $c>1$. Thus, we have $c'\geq1$. On the other hand, notice that if $m_1=\left(\begin{smallmatrix}1&0\\-1&1\end{smallmatrix}\right)$ and $m_2:=\left(\begin{smallmatrix}0&1\\-1&0\end{smallmatrix}\right)$, then $m_1\Gamma(m),m_2\Gamma(m)\in M$ and \[m_1m_2^{-1}=\left(\begin{array}{cc}0&-1\\1&1\end{array}\right)\in m_1\Gamma(m)m_2^{-1}.\]
This implies that $f_0^{-1}[0,c]$ contains the set $\{p_{a,b}m_1H:b\geq\frac{1}{ca}\}$, which has positive $\mu_{\Omega_M}$-measure when $c>1$. This proves that $c'=1$, and hence the repulsion gap, $K_{m,A}$, of $(\FF_{I,m,A}(Q))$ is given by \eqref{e12}.

Figure \ref{fig:3} depicts numerical approximations of densities of the revised measures $\nu_{I,m,A}'$ given by $\nu_{I,m,A}'[0,c]=\nu_{I,m,A}[0,K_{m,A}c]$ for $m=3,6,11$. The multiplication by $K_{m,A}$ makes $\nu_{I,m,A}'$ the limiting measure corresponding to \eqref{e43} in which the normalization of the gaps is $Q^2$, which allows for an even comparison of the sequences. The initial interval $[0,1]$ on which the densities are zero in Figure \ref{fig:3} reflect the fact that the constant $c'$ above equals $1$ for all three sequences.

\begin{figure}
\centering
\includegraphics[width=0.68\textwidth]{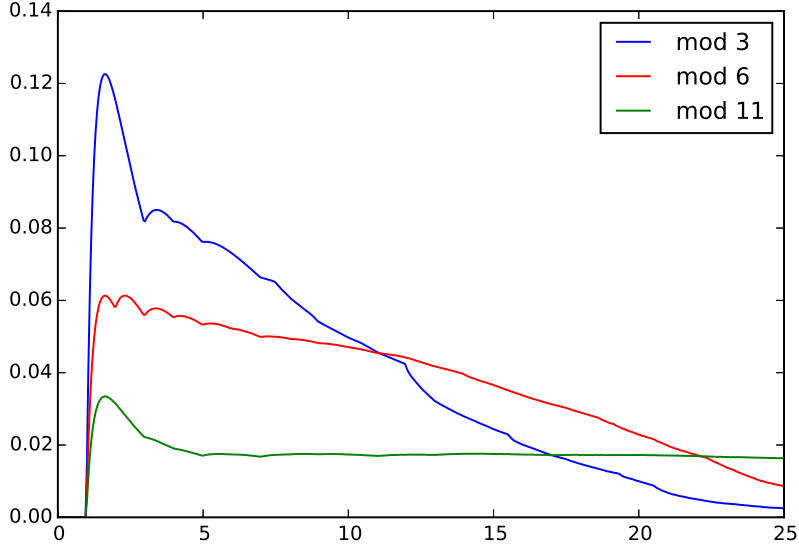}
\caption{Revised gap distribution densities for fractions with denominators congruent to 1 modulo 3, 6, and 11}
\label{fig:3}
\end{figure}

\section{Proof of Theorem \ref{T2}}\label{sec:8}

As in Sections \ref{sec:3}--\ref{sec:6}, we let $H\subseteq\Gamma$ be a finite index subgroup and $M=\{m_1H,\ldots,m_kH\}\subseteq\Gamma/H$ be nonempty and closed under left multiplication by $\left(\begin{smallmatrix}1&-1\\0&1\end{smallmatrix}\right)$, and $I\subseteq[0,1]$ be a subinterval. Recall that $S_{I,M}(n,\alpha,c)$ is the set of $\xi\in I$ for which there exists $\frac{a}{q}\in\FF_M(\lfloor nc\rfloor)$ such that $q\geq n$ and $|q\xi-a|\leq\frac{\alpha}{q}$; and that we aim to show that the limits of the sequences $(\lambda(S_M(n,\alpha,c)))$ and $(\lambda(S_{I,M}(n,\alpha,c)))$ exist, and
\[\lim_{n\rar\infty}S_{I,M}(n,\alpha,c)=|I|\varrho_M(\alpha,c),\quad\mbox{where}\quad\varrho_M(\alpha,c)=\lim_{n\rar\infty}\lambda(S_M(n,\alpha,c)).\]
The idea of the proof is to show that the measures $\lambda(S_{I,M}(n,\alpha,c))$ can, up to a small change, be written as sums of expressions of the form
\begin{equation}\label{e81}
\int_{\mathscr{H}(n/\lfloor nc\rfloor)}f\,d\rho_{\lfloor nc\rfloor,I,M},
\end{equation}
where $\mathscr{H}(\frac{n}{\lfloor nc\rfloor})\subseteq\Omega_M$ is a subset having a boundary of $\mu_{\Omega_M}$-measure $0$, and $f$ is a piecewise smooth function which is bounded on $\mathscr{H}(C)$ for any $C>0$. We then use the convergence $\rho_{Q,I,M}\rar\mu_{\Omega_M}$, in addition to the fact that the region $\mathscr{H}(\frac{n}{\lfloor nc\rfloor})$ becomes $\mathscr{H}(\frac{1}{c})$ as $n\rar\infty$, to show that the above integral approaches
\[\frac{3|I|(\#M)}{\pi^2[\Gamma:H]}\int_{\mathscr{H}(1/c)}f\,d\mu_{\Omega_M}.\]

We begin by following the process in \cite{XZ2} of using the inclusion-exclusion principle to rewrite $\lambda(S_{I,M}(n,\alpha,c))$ as close to a linear combination of measures of intervals. For $\alpha>0$, $c\geq1$, and $n\in\N$, let $Q=\lfloor nc\rfloor$ and 
\[\FF_{I,M}(Q)=\left\{\beta_0=\frac{a_0}{q_0}<\beta_1=\frac{a_1}{q_1}<\cdots<\beta_{N_{I,M}(Q)}=\frac{a_{N_{I,M}(Q)}}{q_{N_{I,M}(Q)}}\right\},\]
with $(a_i,q_i)=1$. Then for every $\beta_i\in\FF_{I,M}(Q)$, let 
\[J(\beta_i)=\bigcup_{i=0}^{N_{I,M}(Q)}\left[\frac{a_i}{q_i}-\frac{\alpha}{q_i^2},\frac{a_i}{q_i}+\frac{\alpha}{q_i^2}\right].\]
and then define
\[S_{I,M}'(n,\alpha,c)=\bigcup_{\substack{\beta_i\in\FF_{I,M}(Q)\\q_i\geq n}}J(\beta_i).\]
The only difference between $S_{I,M}'(n,\alpha,c)$ and $S_{I,M}(n,\alpha,c)$ is that the latter includes the union of the intervals of the form $I\cap[\frac{a}{q}-\frac{\alpha}{q^2},\frac{a}{q}+\frac{\alpha}{q^2}]$, where $\frac{a}{q}$ is a fraction in $\FF_M(Q)$ with $q\geq n$ which lies outside of $I$. It is clear that the measure of the union of these intervals cannot exceed $\frac{2\alpha}{n^2}$, and so the sequences $(\lambda(S_{I,M}(n,\alpha,c)))$ and $(\lambda(S_{I,M}'(n,\alpha,c)))$ converge and have the same limit if one converges. Thus from now on, we examine the sets $S_{I,M}'(n,\alpha,c)$.

Now by the inclusion-exclusion principle, we have
\begin{align}
\lambda(S_{I,M}'(n,\alpha,c))&=\lambda\left(\bigcup_{\substack{\beta_i\in\FF_{I,M}(Q)\\q_i\geq n}}\lambda(J(\beta_i))\right)\nonumber\\
&=\sum_{r=0}^{N_{I,M}(Q)-1}(-1)^r\sum_{0=j_0<\cdots<j_r\leq N_{I,M}(Q)}\sum_{\substack{i=0\\q_{i+j_s}\geq n,0\leq s\leq r}}^{N_{I,M}(Q)-j_r}\lambda\left(\bigcap_{s=0}^rJ(\beta_{i+j_s})\right).\label{e82}
\end{align}
By \cite[Lemma 3]{XZ2}, there exists an integer $K$, depending only on $\alpha$ and $c$, such that if $\frac{a}{q},\frac{a'}{q'}\in\FF(Q)$ such that $q,q'\geq n$ and $J(\frac{a}{q})\cap J(\frac{a'}{q'})\neq\emptyset$, then there are at most $K-1$ elements in $\FF(Q)$ between $\frac{a}{q}$ and $\frac{a'}{q'}$. It follows that if $\beta_i,\beta_j\in\FF_M(Q)$ with $q_i,q_j\geq n $ and $J(\beta_i)\cap J(\beta_j)\neq\emptyset$, then $|i-j|\leq K$. We can thus rewrite \eqref{e82} as
\begin{equation}
\sum_{r=0}^K(-1)^r\sum_{0=j_0<\cdots<j_r\leq K}\sum_{\substack{i=1\\q_{i+j_s}\geq n,0\leq s\leq r}}^{N_{I,M}(Q)-j_r}\lambda\left(\bigcap_{s=0}^rJ(\beta_{i+j_s})\right).\label{e83}
\end{equation}

Next, again analogous to \cite{XZ2}, we construct a region $\mathscr{H}_{j_1,\ldots,j_r}(\frac{n}{Q})$ in $\Omega_M$ having the property that $W_{H,Q}(\beta_i)\in\mathscr{H}_{j_1,\ldots,j_r}(\frac{n}{Q})$ if and only if $q_{i+j_s}\geq n$ for $0\leq s\leq r$, and then write
\[\lambda\left(\bigcap_{s=0}^r J(\beta_{i+j_s})\right)\]
as a piecewise smooth function of $W_{H,Q}(\beta_i)$. In this way, we will rewrite \eqref{e83}, up to a small change, as a linear combination of expressions  in the form \eqref{e81} as mentioned above.

The fraction $\beta_i\in\FF_{I,M}(Q)$ satisfies $q_{i+j_s}\geq n$ if and only if
\[W_{H,Q}(\beta_{i+j_s})=r_M^{j_s}(W_{H,Q}(\beta_i))\in\left\{p_{a,b}m_iH\in\Omega_M:a\geq\frac{n}{Q}\right\}.\]
So the set of $\beta_i$ such that $q_{i+j_s}\geq n$ for $0\leq s\leq r$ are such that $W_{H,Q}(\beta_i)\in \mathscr{H}_{j_1,\ldots,j_r}(\frac{n}{Q})$, where for $t\in(0,1]$,
\[\mathscr{H}_{j_1,\ldots,j_r}(t)=\bigcap_{s=0}^rr_M^{-j_s}\{p_{a,b}m_iH\in\Omega_M:a\geq t\}.\]
Note that $\mathscr{H}_{j_1,\ldots,j_r}(t)$ is a countable union of polygons. This follows easily from our observation in Section \ref{sec:5.1} that for a given $j\in\Z$, $\Omega_M$ can be divided into a countable number of polygones $P'$ such that $r_M^j$ is linear on $P'$. Hence any set of the form $r_M^{-j}\{p_{a,b}m_iH\in\Omega_M:a\geq t\}$ is a countable union of polygons, and thus $\mathscr{H}_{j_1,\ldots,j_r}(t)$ is as well.

Next, for $j\in\N$, let $R_M^{(j)}:\Omega_M\rar\R$ be the $j$th return time function defined by
\[R_M^{(j)}=\sum_{i=0}^{j-1}R_M\circ r_M^i.\]
(Let $R_M^{(0)}\equiv0$.) We then have
\[R_M^{(j)}(W_{H,Q}(\beta_i))=Q^2(\beta_{i+j}-\beta_i).\]
Also, define the function $p:\Omega_M\rar\R$ by $p(p_{a,b}m_iH)=a$.
We then have
\begin{align*}
\lambda\left(\bigcap_{s=0}^rJ(\beta_{i+j_s})\right)&=\max\left\{0,\min_{0\leq s\leq s'\leq r}\left\{\left(\frac{a_{i+j_s}}{q_{i+j_s}}+\frac{\alpha}{q_{i+j_s}^2}\right)-\left(\frac{a_{i+j_{s'}}}{q_{i+j_{s'}}}-\frac{\alpha}{q_{i+j_{s'}}^2}\right)\right\}\right\}\\
&=\frac{1}{Q^2}\max\left\{0,\min_{0\leq s\leq s'\leq r}\left\{\alpha\left(\frac{Q^2}{q_{i+j_s}^2}+\frac{Q^2}{q_{i+j_{s'}}^2}\right)-Q^2\left(\frac{a_{i+j_{s'}}}{q_{i+j_{s'}}}-\frac{a_{i+j_s}}{q_{i+j_s}}\right)\right\}\right\}\\
&=\frac{1}{Q^2}\max\Big\{0,\min_{0\leq s\leq s'\leq r}\big\{\alpha\big((p\circ r_M^{j_s})(W_{H,Q}(\beta_i))^{-2}+(p\circ r_M^{j_{s'}})(W_{H,Q}(\beta_i))^{-2}\big)\\ &\qquad\qquad\qquad\qquad\qquad\quad-(R_M^{(j_{s'}-j_s)}\circ r_M^{j_s})(W_{H,Q}(\beta_i)))\big\}\Big\}\\
&=\frac{1}{Q^2}f_{j_1,\ldots,j_r}^{(\alpha)}(W_{H,Q}(\beta_i)),
\end{align*}
where $f_{j_1,\ldots,j_r}^{(\alpha)}:\Omega_M\rar\R$ is given by
\[f_{j_1,\ldots,j_r}^{(\alpha)}=\max\Big\{0,\min_{0\leq s\leq s'\leq r}\big\{\alpha\big((p\circ r_M^{j_s})^{-2}+(p\circ r_M^{j_{s'}})^{-2}\big)-(R_M^{(j_{s'}-j_s)}\circ r_M^{j_s})\big\}\Big\}.\]

We can now rewrite \eqref{e83} as
\begin{align*}
&\sum_{r=0}^K(-1)^r\sum_{0=j_0<\cdots<j_r\leq K}\frac{1}{Q^2}\sum_{\substack{\beta_i\in\FF_{I,M}(Q),\\i\leq N_{I,M}(Q)-j_r}}f_{j_1,\ldots,j_r}^{(\alpha)}(W_{H,Q}(\beta_i))\\
&\qquad\qquad\qquad=\sum_{r=0}^K(-1)^r\sum_{0=j_0<\cdots<j_r\leq K}\Biggl(\frac{N_{I,M}(Q)}{Q^2}\int_{\mathscr{H}_{j_1,\ldots,j_r}(n/Q)}f_{j_1,\ldots,j_r}^{(\alpha)}\,d\rho_{Q,I,M}\\
&\qquad\qquad\qquad\hspace{130pt}+O\biggl(\frac{K\|f_{j_1,\ldots,j_r}^{(\alpha)}|_{\mathscr{H}_{j_1,\ldots,j_r}(n/Q)}\|_\infty}{Q^2}\biggr)\Biggr),
\end{align*}
where, for $C>0$,
\begin{align*}
\|f_{j_1,\ldots,j_r}^{(\alpha)}|_{\mathscr{H}_{j_1,\ldots,j_r}(C)}\|_\infty&=\sup\left\{|f_{j_1,\ldots,f_r}^{(\alpha)}(p_{a,b}m_iH)|:p_{a,b}m_iH\in\mathscr{H}_{j_1,\ldots,j_r}(C)\right\}\\
&\leq\frac{2\alpha}{C^2}. 
\end{align*}
With $C=\frac{n}{Q}$, we have $\frac{2\alpha}{C^2}\leq2\alpha c^2$. Thus the big $O$ term above is negligible, and to complete the proof, it remains to show the existence of
\[\lim_{n\rar\infty}\int_{\mathscr{H}_{j_1,\ldots,j_r}(n/Q)}f_{j_1,\ldots,j_r}^{(\alpha)}\,d\rho_{Q,I,M}\]
for all $j_1,\ldots,j_r$.

Now by the properties of $R_M$, $r_M$, and $p$, it is clear that $f_{j_1,\ldots,j_r}^{(\alpha)}$ is a piecewise smooth function. We have proven above that for a fixed $C>0$, $f_{j_1,\ldots,j_r}^{(\alpha)}$ is bounded on $\mathscr{H}_{j_1,\ldots,j_r}(C)$. Also, it is clear that $\mathscr{H}_{j_1,\ldots,j_r}(C)$ has a boundary of $\mu_{\Omega_M}$-measure $0$, implying that
\[\lim_{n\rar\infty}\int_{\mathscr{H}_{j_1,\ldots,j_r}(C)}f_{j_1,\ldots,j_r}^{(\alpha)}\,d\rho_{Q,I,M}=\int_{\mathscr{H}_{j_1,\ldots,j_r}(C)}f_{j_1,\ldots,j_r}^{(\alpha)}\,d\mu_{\Omega_M}.\]
Since $\frac{n}{Q}\geq\frac{1}{c}$, and thus $\mathscr{H}_{j_1,\ldots,j_r}(\frac{n}{Q})\subseteq\mathscr{H}_{j_1,\ldots,j_r}(\frac{1}{c})$, we have
\begin{align*}
\limsup_{n\rar\infty}\int_{\mathscr{H}_{j_1,\ldots,j_r}(n/Q)}f_{j_1,\ldots,j_r}^{(\alpha)}\,d\rho_{Q,I,M}&\leq\lim_{n\rar\infty}\int_{\mathscr{H}_{j_1,\ldots,j_r}(1/c)}f_{j_1,\ldots,j_r}^{(\alpha)}\,d\rho_{Q,I,M}\\
&=\int_{\mathscr{H}_{j_1,\ldots,j_r}(1/c)}f_{j_1,\ldots,j_r}^{(\alpha)}\,d\mu_{\Omega_M}.
\end{align*}
On the other hand, for a given $\epsilon>0$, we have $\frac{n}{Q}\leq\frac{1}{c}+\epsilon$ for large $n$. Therefore,
\begin{align*}
\liminf_{n\rar\infty}\int_{\mathscr{H}_{j_1,\ldots,j_r}(n/Q)}f_{j_1,\ldots,j_r}^{(\alpha)}\,d\rho_{Q,I,M}&\geq\lim_{n\rar\infty}\int_{\mathscr{H}_{j_1,\ldots,j_r}(1/c+\epsilon)}f_{j_1,\ldots,j_r}^{(\alpha)}\,d\rho_{Q,I,M}\\
&=\int_{\mathscr{H}_{j_1,\ldots,j_r}(1/c+\epsilon)}f_{j_1,\ldots,j_r}^{(\alpha)}\,d\mu_{\Omega_M}.
\end{align*}
By the continuity of measure from below, letting $\epsilon\rar0$ yields
\[\liminf_{n\rar\infty}\int_{\mathscr{H}_{j_1,\ldots,j_r}(n/Q)}f_{j_1,\ldots,j_r}^{(\alpha)}\,d\rho_{Q,I,M}\geq\int_{\mathscr{H}_{j_1,\ldots,j_r}'(1/c)}f_{j_1,\ldots,j_r}^{(\alpha)}\,d\mu_{\Omega_M},\]
where
\[\mathscr{H}_{j_1,\ldots,j_r}'\left(\frac{1}{c}\right)=\bigcap_{s=0}^rr_M^{-j_s}\left\{p_{a,b}m_iH\in\Omega_M:a>\frac{1}{c}\right\}.\]
We clearly have
\[\mu_{\Omega_M}\left(\bigcap_{s=0}^rr_M^{-j_s}\left\{p_{a,b}m_iH\in\Omega_M:a=\frac{1}{c}\right\}\right)=0,\]
and thus
\[\lim_{n\rar\infty}\int_{\mathscr{H}_{j_1,\ldots,j_r}(n/Q)}f_{j_1,\ldots,j_r}^{(\alpha)}\,d\rho_{Q,I,M}=\int_{\mathscr{H}_{j_1,\ldots,j_r}(1/c)}f_{j_1,\ldots,j_r}^{(\alpha)}\,d\mu_{\Omega_M}.\]
Noting again that $N_{I,M}(Q)\sim\frac{3|I|(\#M)Q^2}{\pi^2[\Gamma:H]}$, we have completed the proof of Theorem \ref{T2}, with
\[\varrho_M(\alpha,c)=\frac{3(\#M)}{\pi^2[\Gamma:H]}\sum_{r=0}^K(-1)^r\sum_{0=j_0<\cdots<j_r\leq K}\int_{\mathscr{H}_{j_1,\ldots,j_r}(1/c)}f_{j_1,\ldots,j_r}^{(\alpha)}\,d\mu_{\Omega_M}.\]

\ 

\noindent\textbf{Acknowledgements}. I thank my advisor Florin Boca for his guidance in this research. I thank Jayadev Athreya and Jens Marklof for constructive comments on the first draft of this paper, and I thank the referee for helpful comments and suggestions. I also acknowledge support from Department of Education Grant P200A090062, \lq\lq University of Illinois GAANN Mathematics Fellowship Project.\rq\rq

\end{document}